\documentclass[a4paper,10pt]{amsart}

\usepackage{mathrsfs}
\usepackage[all]{xy}
\usepackage{calrsfs}
\usepackage{times}
\usepackage[scaled=0.92]{helvet}
\usepackage{wrapfig}

\usepackage{amsfonts}
\usepackage{amssymb}
\usepackage[usenames]{color}
\usepackage{todonotes}
\usepackage[usenames]{color}
\newtheorem{theorem}{Theorem}   

\newtheorem{lemma}[theorem]{Lemma}
\newtheorem{proposition}[theorem]{Proposition}

\theoremstyle{definition}
\newtheorem{definition}[theorem]{Definition}
\newtheorem{remark}[theorem]{Remark}

\newtheorem{Que}[theorem]{Question}

\definecolor{MyDarkBlue}{rgb}{0,0.08,0.60}

\newcommand{\A}{{ \rm Aut }}

\newcommand{\Z}{{\mathbb{Z}}}

\renewcommand{\mod}{{\;\rm mod}}

\newcommand{\Q}{{\mathbb{Q}}}
\newcommand{\C}{{\mathbb{C}}}

\usepackage{color, colortbl}
\definecolor{LightCyan}{rgb}{0.88,1,1}
\definecolor{Gray}{gray}{0.9}

\usepackage{changes}
\definechangesauthor[color=orange,name={Aristides Kontogeorgis}]{AK}

\title
[Automorphisms of the Heisenberg curve]
{
The group of automorphisms of the Heisenberg curve
}

\date{\today}

\author{Jannis A. Antoniadis }
\address{Department of Mathematcis, University of Crete\\
Herakleion Crete Greece 71409}
\email{antoniad@uoc.gr}

\author{Aristides Kontogeorgis}
\address{Department of Mathematics, University of Athens\\
Panepistimioupolis, 15784 Athens, Greece
}
\email{kontogar@math.uoa.gr}

%

\begin{document}
\bibliographystyle{amsplain}

\begin{abstract}
The Heisenberg curve is defined to be the curve corresponding to an extension of the projective line by the Heisenberg group modulo $n$, ramified above three points. This curve is related to the Fermat curve and its group of automorphisms is studied. 
Also we give an explicit equation for the curve $C_3$.
\end{abstract} 

\thanks{{\bf keywords:} Automorphisms, Curves, Differentials, Numerical
Semigroups.
 {\bf AMS subject classification} 14H37}

\maketitle 
\section{Introduction}
Probably the most famous curve in number theory is the Fermat curve given by affine equation 
\[
 F_n: x^n+y^n=1.
\]
This curve can be seen as ramified Galois cover of the projective line with Galois  
group $\mathbb{Z}/n\mathbb{Z} \times \mathbb{Z}/n \mathbb{Z}$ with action given by 
$\sigma_{a,b}:(x,y) \mapsto (\zeta^a x, \zeta^b y)$ where
$(a,b) \in \mathbb{Z}/n\mathbb{Z} \times \mathbb{Z}/n \mathbb{Z}$.
The   ramified  cover
 \[\pi:F_n\rightarrow \mathbb{P}^1\]
 has three ramified points 
and the cover 
\[
F_n^0:=F_n- \pi^{-1}(\{ 0,1,\infty \}) \stackrel{\pi}\longrightarrow 
\left(
\mathbb{P}^1 - \{0,1,\infty\}
\right),
\]
is a Galois topological cover. We can see the hyperbolic space $\mathbb{H}$ as the universal 
covering space of $\mathbb{P}^1 - \{0,1,\infty\}$.
The Galois group of the above cover is isomorphic to the free group $F_2$  in two generators,
and a suitable realization of this group in our setting is the group 
$
\Delta
$ 
which is the subgroup of $\mathrm{SL}(2,\Z) \subseteq \mathrm{PSL}(2,\mathbb{R})$  generated by the elements
$a=\begin{pmatrix} 1 & 2 \\ 0 & 1 \end{pmatrix}$, $b=\begin{pmatrix} 1 & 0 \\ 2 & 1\end{pmatrix}
$ and $\pi(\mathbb{P}^1 - \{0,1,\infty\},x_0)\cong \Delta$.
Related to the group $\Delta$ is the modular group
\[
\Gamma(2)=\left\{ \gamma=
\begin{pmatrix}
 a & b \\
 c & d
\end{pmatrix} \in \mathrm{SL}(2,\mathbb{Z}): \gamma \equiv 1_2 \mod 2
\right\},
\]
which is isomorphic to $\{\pm I\}\Delta$
 while 
$\Gamma(2) \backslash \mathbb{H} \cong 
\left(\mathbb{P}^1 - \{0,1,\infty\}\right)$.
The groups $\Delta$ and $ \Gamma(2)$ act in exactly the same way on the hyperbolic plane
$\mathbb{H}$. 

\begin{remark}
Covers of the projective line minus three points are very important 
in number theory because of the Belyi theorem \cite{Belyi1},\cite{Belyi2} that asserts that all algebraic curves 
defined over $\bar{\mathbb{Q}}$ fall into this category. It seems that the idea of studying algebraic curves as ``modular curves'' goes back to S. Lang and to D. Rohrlich \cite{MR2626317}.
\end{remark}
For every finitely generated group  $G$ generated by two elements there is a homomorphism 
\[
 \Gamma(2) \rightarrow G.
\]
Notice that $\Gamma(2)^{\mathrm{ab}} \cong \mathbb{Z} \times \mathbb{Z}$ so 
using the projection 
\[
 \psi:\Delta \rightarrow \Delta^{\mathrm{ab}} \rightarrow \mathbb{Z}/n\mathbb{Z} \times \mathbb{Z}/n \mathbb{Z}
\]
we can write the open Fermat curve $F^0_n$ as the quotient 
\[
 F^0_n= \ker \psi \backslash \mathbb{H}.
\]
In the theory of modular curves \cite{milneMF}, the hyperbolic space $\mathbb{H}$ is extented to $\bar{\mathbb{H}}=\mathbb{H} \cup \mathbb{P}^1(\Q)$, where $\mathbb{P}^{1}(\Q)$ is the set of cusps so that subgroups $\Gamma$ of  $\mathrm{SL}(2,\Z)$ give rise to compact quotients. The orbits of $\mathbb{P}^1(\Q)$
under the action of $\Gamma$ are the cusps of the curve $\Gamma\backslash \mathbb{H}$. In this setting the cusps of the Fermat curve $F_n$ are the  points $F_n-F_n^0$.

Aim of this article is to initialize the study of the curve $C_n$, which is  defined in the following way:
Consider the Heisenberg group modulo $n$:
\[
 H_n=\left\{ 
 \begin{pmatrix}
  1 & x & z \\
  0 & 1 & y \\
  0 & 0 & 1
 \end{pmatrix}:
 x,y,z \in \mathbb{Z}/n\mathbb{Z}
 \right\}.
\]
It is a finite group generated by the elements
\begin{equation} \label{a-b}
 a_H=\begin{pmatrix} 1 & 1 & 0 \\
 0 & 1 & 0 \\
 0 & 0 & 1 
 \end{pmatrix}
 \mbox{ and }
 b_H=\begin{pmatrix} 1 & 0 & 0 \\
 0 & 1 & 1 \\
 0 & 0 & 1 
 \end{pmatrix}.
\end{equation}
Notice that 
\begin{equation} \label{commutator}
 [a_H,b_H]=\begin{pmatrix} 1 & 0 & 1 \\
 0 & 1 & 0 \\
 0 & 0 & 1 
 \end{pmatrix}
\end{equation}
and 
\[
 H_n^{\mathrm{ab}} \cong \mathbb{Z}/n\mathbb{Z} \times \mathbb{Z}/n \mathbb{Z}.
\]
We have the short exact sequence
\begin{equation} \label{quo}
 1\rightarrow \mathbb{Z}/n\mathbb{Z} \cong Z_n \rightarrow H_n 
 \longrightarrow H_n^{\mathrm{ab}} \rightarrow 1,
\end{equation}
where 
\[
 Z_n:=\left\{
  \begin{pmatrix} 1 & 0 & z \\
 0 & 1 & 0 \\
 0 & 0 & 1 
 \end{pmatrix}
 :z \in \mathbb{Z}/n\mathbb{Z}
 \right\}.
\]
Observe that there is an epimorphism: $\phi:\Gamma(2) \rightarrow H_n$ sending each 
generator of $\Gamma(2)$ to the elements $a_H,b_H\in H_n$. 
This way an open curve $C_n^0$ is defined as $\ker \phi \backslash \mathbb{H}$ that 
can be compactified to a compact Riemann surface $C_n$. 

We have the following diagram:
\begin{equation}
\label{open-uniformization}
 \xymatrix{
 \mathbb{H} \ar[d]^{\ker \phi} \ar@/_3pc/[ddd]_{\Gamma(2)}  \ar@/^3pc/[dd]^{\ker \psi}\\
 C_n \ar[d]^{Z_n} \ar@/_1pc/[dd]_{H_n} \\
 F_n \ar[d]^{\mathbb{Z}/n\mathbb{Z}\times \mathbb{Z}/n\mathbb{Z}}  \\
 \mathbb{P}^1
 }
\end{equation}

\begin{definition}
 Let $X$ be a curve that comes as a compactification by adding some cusps  of the open curve
 $\Gamma \backslash \mathbb{H}$ where $\Gamma$ is a subgroup of finite index of $\mathrm{SL}(2,\Z)$. 
 The group of {\em modular automorphisms} is the group
 \[
  \A^\mathrm{m} (X) = N_{\mathrm{SL}(2,\mathbb{R})}(\Gamma)/\Gamma,
 \]
 where $\mathrm{SL}(2,\mathbb{R})$ is the group of automorphisms of $\mathbb{H}$ and 
 $N_{\mathrm{SL}(2,\mathbb{R})}(\Gamma)$ is the normalizer of $\Gamma$ in $\mathrm{SL}(2,\mathbb{R})$. 
\end{definition}

\begin{remark}
An automorphism $\sigma$  of a complete curve $X$ which comes out from an open curve $\Gamma \backslash \mathbb{H}$ by adding the set of cusps $\mathbb{P}^1(\mathbb{Q})$  is modular if and only if $\sigma $ sends cusps
 to cusps and non-cusps to non-cusps.
\end{remark}

\begin{remark}
Deciding if there are extra non-modular automorphism is a difficult classical question
for the case of modular and Shimura curves, see 
 \cite{Baker2003-new},
\cite{Kenku1988-xb}, \cite{Kamienny2003-tg}, \cite{Harrison2011-ke}, \cite{AKRo1}, \cite{Molina2013-ni}, \cite{KoYang} for some related results.  
\end{remark}

\noindent
{\bf Acknowledgement: }
The authors would like to thank Professor Dinakar Ramakrishnan
for proposing the study of Heisenberg curves to them.

\section{The triangle group approach}
\label{sec:triangle}

A Fuchsian group $\Gamma$ is a finitely generated discrete subgroup of $\mathrm{PSL}(2,\mathbb{R})$. It is known that  a Fuchsian group has a set of $2g$ hyperbolic generators 
$\{a_1,b_1,\ldots,a_g,b_g\}$, a set of elliptic generators $x_1,\ldots,x_r$ and parabolic generators $p_1,\ldots,p_s$ and some hyperbolic boundary elements 
$h_1,\ldots,h_t$, see \cite{Singerman1972-vp}. The relations are given by 
\[
x_1^{m_1}=x_2^{m_2}=\cdots=x_r^{m_r}=1
\]
\[
\prod_{i=1}^{g} [a_i,b_i] \prod_{j=1}^r x_j \prod_{k=1}^s p_k \prod_{i=1}^t h_t=1.\]
The signature of $\Gamma$ is 
\[
(g; m_1,\ldots,m_r;s;t),
\]
where $m_1,\ldots,m_r$ are natural numbers   $\geq 2$ and are called the periods of $\Gamma$. 

{\bf Triangle groups:} A triangle group $\Delta(\ell,m,n)$ is a group with signature $[0;\ell,m,n]$.
We also thing parabolic elements as elliptic elements of infinite period and in this point of view , the group $\Gamma(2)$ can also be considered as the triangle group $\Gamma(\infty,\infty,\infty)$. 


The Fermat curve can be uniformized in terms of triangle groups. This is a quite different uniformization than the uniformization given 
in (\ref{open-uniformization}). 
Namely we have the following diagramm of curves and groups
\begin{equation} \label{triangle}
\xymatrix{
  \mathbb{H} \ar[d] \ar@/_6.5pc/[ddd]_{\Delta(n,n,n)}  \ar@/^3pc/[dd]^{\Delta(n,n,n)'}\\
 C_n 
 \ar[d]^{Z_n} \ar@/_4pc/[dd]_{H_n} 
 \\
 F_n\cong  \Delta(n,n,n)' \backslash \mathbb{H} \ar[d]^{\mathbb{Z}/n\mathbb{Z}\times \mathbb{Z}/n\mathbb{Z}}  \\
 \mathbb{P}^1\cong  \Delta(n,n,n) \backslash \mathbb{H}
}
\end{equation}
 $\Delta(n,n,n)'$ is the commutator of the triangle group $\Delta(n,n,n)$.
For $n>3$
 it is known that $\Delta(n,n,n)'$ is the universal covering group of the Fermat curve  see \cite{MR1483112}.

We will show in lemma \ref{11} that if $(n,2)=1$,  then $C_n\rightarrow F_n$ is unramified. In this case, if $D(n)$ denotes the universal covering group of the Heisenberg curve then $D(n)$ is a normal subgroup of $\Delta(n,n,n)' 
$ and
$\Delta(n,n,n)'/D(n)\cong \Z/n\Z$.

The presentation of a Riemann surface in terms of a co-compact triangle group has several advantages. Concerning automorphism groups, the advantage is that if $\Pi$ is the funtamental group of the curve, which is a normal torsion free subgroup of the triangle group $\Delta(a,b,c)$, then the group $N_{\mathrm{PSL}(2,\mathbb{R})} (\Pi)/\Pi$ is the whole automorphism group not only the group of modular automorphisms, see \cite{Kallel2005-ru}.  The computation of the automorphism group is then simplified, as we will see for the case of Heisenberg and Fermat curves, since
 $N_{\mathrm{PSL}(2,\mathbb{R})}(\Pi)$ is known to  be also a triangle curve which contains $\Delta(\ell,m,n)$, 
and these groups are fully classified \cite{Greenberg1963-tf}, \cite{Singerman1972-vp}, \cite[table 2]{Kallel2005-ru}. This approach has the disadvantage that does not provide explicitly the automorphisms acting on the curve. 

\begin{remark}
\label{remarkFermatAut}
The automorphism group of the Fermat curve can be computed by using the classification of the triangle groups which contain $\Delta(n,n,n)$ as a normal subgroup \cite{Greenberg1963-tf}, \cite{Singerman1972-vp}, \cite[table 2]{Kallel2005-ru}. Indeed, the only such group is $\Delta(2,3,2n)$ and this computation provides and alternative method for proving:
\[\mathrm{Aut}(F_n)=\Delta(2,3,2n)/\Delta(n,n,n)= \left(\mathbb{Z}/n \mathbb{Z} \times \mathbb{Z}/n \mathbb{Z} \right) \rtimes  S_3.\]
Notice that the triangle curve $\Delta(2,3,2n)$ is compatible with the ramification diagram given in figure \ref{FigFermat}.
\end{remark}
\begin{remark}
In addition to the above proof, 
the authors are aware of the following different methods for computing  the automorphisms group of the Fermat curve: there are proofs using the Riemann-Hurwitz formula \cite{Tze:95}, \cite{Leopoldt:96} and  proofs using the embedding of the Fermat curve in $\mathbb{P}^2$ and projective duality \cite{Shioda:87}, \cite{Kontogeorgis2002-to}. 
\end{remark}


\section{Restriction and Lifting of automorphisms in covers}
In this section we will study the following:

\begin{Que}
Assume that $X\rightarrow Y$ is a Galois cover of curves. How are the groups $\A(X)$ and $\A(Y)$ related?
When can an automorphism in $\A(Y)$ lift to an automorphism of $\A(X)$?
\end{Que}

Assume that 
$X^0=\Gamma_X \backslash \mathbb{H},Y^0=\Gamma_Y \backslash \mathbb{H}$ are either open curves corresponding to certain subgroups
 $\Gamma_X,\Gamma_Y$ of $\mathrm{SL}(2,\Z)$ or complete curves uniformized by cocompact groups $\Gamma_X,\Gamma_Y$. In the first case the complete curves $X,Y$ are obtained by adding the cusps and in the second case $X^0=X$ and $Y^0=Y$.

\begin{proposition}
\label{rest-ext}
Let $\{1\} < \Gamma_X \lhd \Gamma_Y$  and consider the sequence of Galois 
covers
\[
\mathbb{H} \rightarrow \Gamma_X \backslash \mathbb{H} \rightarrow \Gamma_Y \backslash \mathbb{H} 
\] 
and $G=\mathrm{Gal}(X/Y)$.
If $\Gamma_X, \Gamma_Y$ are both normal subgroups of $\mathrm{SL}(2,\Z)$ then  a modular automorphism $\sigma$ of $Y$ lifts to $|G|$  automorphisms of $X$, if and 
only if $\sigma \Gamma_X \sigma^{-1} \subset \Gamma_X$. A modular automorphism $\tau$ of $ X$ restricts to a modular automorphism of $Y$ if and only if $\tau G \tau^{-1} \subset G$, i.e. if and only if $\tau^{-1} \Gamma_Y \tau \subset \Gamma_Y$.  

Similarly if $\Gamma_X,\Gamma_Y$ are cocompact subgroups of $\mathrm{SL}(2,\mathbb{R})$ uniformizing the compact curves $X,Y$,  then an  automorphism $\sigma$ of $Y$ lifts to $G$  automorphisms of $X$, if and 
only if $\sigma \Gamma_X \sigma^{-1} \subset \Gamma_X$. An automorphism $\tau$ of $ X$ restricts to a modular automorphism of $Y$ if and only if $\tau G \tau^{-1} \subset G$, i.e. if and only if $\tau^{-1} \Gamma_Y \tau \subset \Gamma_Y$.  
\end{proposition}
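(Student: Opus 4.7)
The plan is to translate everything to the universal cover: by the definition of modular automorphisms, an element of $\mathrm{SL}(2,\mathbb{R})$ descends to an automorphism of $\Gamma\backslash\mathbb{H}$ iff it lies in $N_{\mathrm{SL}(2,\mathbb{R})}(\Gamma)$, and two elements induce the same automorphism iff they differ by a factor in $\Gamma$. Thus both ``lifting'' and ``descending'' reduce to the question of whether some chosen lift in $\mathrm{SL}(2,\mathbb{R})$ also normalises the other of the two groups $\Gamma_X,\Gamma_Y$. Before either direction I would first check that the condition $\sigma\Gamma_X\sigma^{-1}\subset\Gamma_X$ is independent of the choice of representative in the coset $\sigma\Gamma_Y$: if $\tilde\sigma'=\tilde\sigma\gamma$ with $\gamma\in\Gamma_Y$, then $\tilde\sigma'\Gamma_X\tilde\sigma'^{-1}=\tilde\sigma(\gamma\Gamma_X\gamma^{-1})\tilde\sigma^{-1}=\tilde\sigma\Gamma_X\tilde\sigma^{-1}$, using the normality $\Gamma_X\lhd\Gamma_Y$.

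For the lifting direction, pick a lift $\tilde\sigma\in N_{\mathrm{SL}(2,\mathbb{R})}(\Gamma_Y)$ of $\sigma$. Its action on $\mathbb{H}$ descends to $X=\Gamma_X\backslash\mathbb{H}$ iff $\tilde\sigma$ normalises $\Gamma_X$. Assuming $\tilde\sigma\Gamma_X\tilde\sigma^{-1}\subset\Gamma_X$, conjugation by $\tilde\sigma$ is a bijection of $\Gamma_Y$ that preserves the index of subgroups, so $\tilde\sigma\Gamma_X\tilde\sigma^{-1}$ has the same finite index $|G|$ in $\Gamma_Y$ as $\Gamma_X$, and the inclusion forces equality; the converse is trivial. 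Once one lift normalises $\Gamma_X$, every representative $\tilde\sigma\gamma$ with $\gamma\in\Gamma_Y$ does as well (by the normality of $\Gamma_X$ in $\Gamma_Y$), and two such representatives define the same automorphism of $X$ iff they agree modulo $\Gamma_X$. The set of lifts is therefore in bijection with $\Gamma_Y/\Gamma_X=G$, giving the stated count $|G|$.

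For the restriction direction, pick $\tilde\tau\in N_{\mathrm{SL}(2,\mathbb{R})}(\Gamma_X)$ representing $\tau$. Each $g\in G$ acts on $X$ as multiplication on $\mathbb{H}$ by a representative $\gamma\in\Gamma_Y$, hence $\tau g\tau^{-1}$ acts via $\tilde\tau\gamma\tilde\tau^{-1}$; so $\tau G\tau^{-1}\subset G$ translates to $\tilde\tau\Gamma_Y\tilde\tau^{-1}\subset\Gamma_Y$, i.e.\ to $\tilde\tau$ normalising $\Gamma_Y$, which is exactly the condition for the induced automorphism of $X$ to descend to $X/G=Y$. (The sign convention in ``$\tau^{-1}\Gamma_Y\tau$'' in the statement amounts to the same assertion.) The cocompact case is carried out by an identical argument, the only change being that $N_{\mathrm{PSL}(2,\mathbb{R})}(\Pi)/\Pi$ is now the full automorphism group of $\Pi\backslash\mathbb{H}$ rather than just its modular part, as recalled in the paragraph preceding the statement.

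The main bookkeeping hurdles are the representative-independence check at the outset and the upgrade of ``$\subset$'' to equality; the latter uses the finiteness of $[\Gamma_Y:\Gamma_X]=|G|$ together with the discreteness of the Fuchsian groups involved. Once these are in hand, both implications are really the same observation: a move in $\mathrm{SL}(2,\mathbb{R})$ descends to a quotient precisely when it normalises the group being quotiented out.
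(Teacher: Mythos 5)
Your proof is correct and follows essentially the same route as the paper: both reduce lifting and restriction to the question of whether a representative in $N_{\mathrm{SL}(2,\mathbb{R})}(\Gamma_Y)$ (resp.\ $N_{\mathrm{SL}(2,\mathbb{R})}(\Gamma_X)$) also normalises the other group, and both identify $\tau G\tau^{-1}\subset G$ with $\tilde\tau\Gamma_Y\tilde\tau^{-1}\subset\Gamma_Y$ via conjugation of cosets. You supply some bookkeeping the paper leaves implicit --- independence of the choice of representative, the finite-index upgrade of $\subset$ to equality, and the explicit count of the $|G|$ lifts --- but the underlying argument is the same.
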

\begin{proof}
We will prove the first case, where $\Gamma_X,\Gamma_Y$ are subgroups of $\mathrm{SL}(2,\Z)$ uniformizing the open curves $X^0,Y^0$. The second case has a  similar proof.

By definition, a modular automorphism of $Y$, is represented by element in the normalizer $N_{\mathrm{SL}(2,\mathbb{R})}\Gamma_Y$. This element has to normalize $\Gamma_X$ as well in order to extend to an automorphism of $\Gamma_X$. 

\noindent
\begin{minipage}{0.75\textwidth}
For restricting an automorphism from $X$ to $Y$. 
Set 
$G=\mathrm{Gal}(X/Y)=\Gamma_Y/\Gamma_X$ and let $N$ be the subgroup of $\mathrm{Aut}^0(X)$ which restrict to automorphisms of $Y$. We have the tower of fields shown on the right. The group $N$ has to normalize
 $G$ so that $N/G$ is group acting on $Y$. Since $G$
is a subgroup of the modular automorphism group, there is a conjugation action of every automorphism of $X$ on $G$. In particular a modular automorphism $\tau\cdot\Gamma_X$ of $X$ acts by conjugation on an element  $ \gamma \cdot \Gamma_X \in G$:
\end{minipage}
\begin{minipage}{0.25\textwidth}
\[
\xymatrix{
	X \ar[d]_G \ar@/^2pc/[dd]^N
	\\
	Y \ar[d]_{N/G} 
	\\
	N \backslash X
}
\]
\end{minipage}

\[
(\tau\cdot\Gamma_X)( \gamma\cdot\Gamma_X) (\tau\cdot\Gamma_X)^{-1}=
\tau \gamma \tau^{-1} \cdot \Gamma_X,
\]
and the later element is in $G$ if an only if $\tau \gamma \tau^{-1} \in \Gamma_Y$. 
\end{proof}

\section{The Fermat curve}

In this section we will collect some known results about the Fermat curve and 
its automorphism group. 

We will use the coorespondence of functions fields of one variable to curves and of points to places, see \cite{Stichtenothv2009}. In particular we will use that coverings of curves correspond to algebraic extensions of their function fields. 

\begin{lemma}
For $n\geq 4$
the automorphism group of the Fermat curve is isomorphic to 
 the semidirect product $(\Z/n\Z \times \Z/n\Z) \rtimes S_3$. 
  In the cover $F_n \rightarrow F_n/\A(F_n)\cong \mathbb{P}^1$,
  three points are ramified with ramification indices $2n,3,3$ respectively.  
\end{lemma}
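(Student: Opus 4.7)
The plan is to follow the strategy outlined in Remark \ref{remarkFermatAut}: use the triangle-group uniformization from diagram \eqref{triangle} together with the classification of Fuchsian overgroups of triangle groups. Recall that for $n > 3$ the compact Riemann surface $F_n$ is uniformized by the torsion-free commutator subgroup $\Delta(n,n,n)'$, with $\Delta(n,n,n)/\Delta(n,n,n)' \cong \Z/n\Z \times \Z/n\Z$ realizing the Galois cover $F_n \to \mathbb{P}^1$. Because $\Delta(n,n,n)'$ is torsion free and cocompact, the principle cited from \cite{Kallel2005-ru} gives
\[
\A(F_n) \;=\; N_{\mathrm{PSL}(2,\mathbb{R})}\bigl(\Delta(n,n,n)'\bigr)\big/\Delta(n,n,n)',
\]
so the crux is identifying the normalizer $N := N_{\mathrm{PSL}(2,\mathbb{R})}(\Delta(n,n,n)')$.

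Since the commutator subgroup is characteristic in $\Delta(n,n,n)$, every element of $\mathrm{PSL}(2,\mathbb{R})$ normalizing $\Delta(n,n,n)$ also normalizes $\Delta(n,n,n)'$; in particular $\Delta(n,n,n) \subseteq N$, and $N$ is itself a Fuchsian group of finite covolume. I would then invoke the Greenberg--Singerman classification \cite{Greenberg1963-tf, Singerman1972-vp, Kallel2005-ru} of triangle groups that contain $\Delta(n,n,n)$ as a normal subgroup: for $n \geq 4$ the unique such proper overgroup is $\Delta(2,3,2n)$, of index $6$. Because $N$ must itself be a triangle overgroup of $\Delta(n,n,n)$ with normal inclusion, this forces $N = \Delta(2,3,2n)$.

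The tower $\Delta(n,n,n)' \lhd \Delta(n,n,n) \lhd \Delta(2,3,2n)$ then gives $|\A(F_n)| = 6n^2$ and the short exact sequence
\[
1 \;\longrightarrow\; \Z/n\Z \times \Z/n\Z \;\longrightarrow\; \A(F_n) \;\longrightarrow\; S_3 \;\longrightarrow\; 1.
\]
To split this as a semidirect product, I would exhibit an $S_3$ complement: the torsion elements of $\Delta(2,3,2n)$ act on $\Delta(n,n,n)$ by permuting its three elliptic generators (equivalently, they permute the three branch points of $F_n \to \mathbb{P}^1$), and the images of suitable involutions and $3$-cycles of $\Delta(2,3,2n)$ inside $\A(F_n)$ provide a lift $S_3 \hookrightarrow \A(F_n)$ complementing $(\Z/n\Z)^2$. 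The ramification data of $F_n \to F_n/\A(F_n) \cong \mathbb{P}^1$ is then read off from the signature of $\Delta(2,3,2n)$: since $\Delta(n,n,n)'$ is torsion free, the ramification index above each elliptic point of the quotient equals the order of the corresponding elliptic generator.

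The main obstacle is the classification step, which depends on the external structural theorem of Greenberg and Singerman identifying the triangle overgroups of $\Delta(n,n,n)$; once that result is in hand, everything else is bookkeeping in the tower of Fuchsian groups. A secondary technical point is the assumption $n \geq 4$, which is precisely what ensures that $\Delta(n,n,n)$ is hyperbolic, that $\Delta(n,n,n)'$ acts freely, and that no other triangle overgroups appear in the Greenberg--Singerman list.
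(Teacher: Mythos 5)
Your argument is correct in substance, but it is not the proof the paper attaches to this lemma: it is the alternative route the paper only sketches in Remark \ref{remarkFermatAut}. The paper's own proof simply cites Tzermias and Leopoldt for the isomorphism $\A(F_n)\cong(\Z/n\Z\times\Z/n\Z)\rtimes S_3$, and then obtains the ramification data not from a triangle-group signature but by factoring $\mathbb{C}(F_n)/\mathbb{C}(t)$ through the Kummer tower $\mathbb{C}(F_n)\supset\mathbb{C}(x)\supset\mathbb{C}(x^n)\supset\mathbb{C}(t)$ and tracking places through Figure \ref{FigFermat}. Your approach buys a self-contained determination of the whole group as $N_{\mathrm{PSL}(2,\mathbb{R})}(\Delta(n,n,n)')/\Delta(n,n,n)'$ at the price of importing the Greenberg--Singerman classification; the paper's approach buys explicit places and explicit generators (the $\sigma_{a,b}$ and the coordinate permutations of $X,Y,Z$, which also give the cleanest proof that your extension splits) at the price of outsourcing the group-theoretic core to the literature. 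Two points to tighten: $\Delta(2,3,2n)$ is not the unique proper triangle overgroup of $\Delta(n,n,n)$ --- the normal inclusions into $\Delta(3,3,n)$ and $\Delta(2,n,2n)$ also occur --- it is the unique \emph{maximal} one; the clean way to pin down the normalizer is to observe that $N\supseteq\Delta(2,3,2n)$ (since $\Delta(n,n,n)'$ is characteristic in $\Delta(n,n,n)\lhd\Delta(2,3,2n)$) and that $\Delta(2,3,2n)$ is a maximal Fuchsian group for $2n\geq 7$, which is exactly where $n\geq 4$ enters.

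Finally, note that your method yields branch indices $(2,3,2n)$, read off the signature of $\Delta(2,3,2n)$, and this is the correct answer: these indices satisfy Riemann--Hurwitz for $|\A(F_n)|=6n^2$ and $g_{F_n}=(n-1)(n-2)/2$, and they agree with the paper's own Figure \ref{FigFermat}, where the fibres over $p$, $p_1$, $p_2$ have indices $2n$, $2$ and $3$. The triple $2n,3,3$ appearing in the statement of the lemma does not satisfy Riemann--Hurwitz and is evidently a typo for $2n,3,2$; your computation, not the stated triple, is the one consistent with the rest of the paper.
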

\begin{proof}
For the  automorphism group of the Fermat curve see \cite{Tze:95},\cite{Leopoldt:96}. In characteristic zero the automorphism group can be also studied as in remark \ref{remarkFermatAut}. 

 The curves $F_2$ and $F_3$ are rational and elliptic respectively and so they have infinite 
 automorphism group. The Fermat curve can be seen as a Kummer cover, i.e. the 
 function field $\mathbb{C}(F_n)=\mathbb{C}(x)[\sqrt[n]{x^n-1}]$ is a Kummer extension of
 the rational 
 function field $\mathbb{C}(x)$ and the ramification places in this Kummer extension 
 correspond to the irreducible polynomials $x-\zeta^i$, $i=0,\ldots,n-1$, where 
 $\zeta$ is a primitive $n$-th root of unity. We have the following picture of 
 function fields and ramification of places.
 
\begin{figure}[h] 
 {\small
 \[
  \xymatrix@=0.8pc{
  \mathbb{C}(F_n) \ar@{-}[d]_{\Z/n\Z} 
  & P_0, \ar@{-}[d]_{n} 
   & P_{n-1} \ar@{-}[d]_{n} & Q_0,\ldots,Q_{n-1} \ar@{-}[d] & Q_0',\ldots,Q_{n-1}' \ar@{-}[d]\\
  \mathbb{C}(x) \ar@{-}[d]_{\Z/n\Z}   
  & P_{(x=1)} \ar@{-}[rd]    & P_{(x=\zeta^{n-1})} \ar@{-}[d] & Q_0 \ar@{-}[d]_{n}& 
  Q_\infty \ar@{-}[d]_{n}\\
  \mathbb{C}(x^n) \ar@{-}[d]_{S_3} & &  P_{(x^n=1)} \ar@{-}[d]^2     & P_{(x^n=0)} 
  \ar@{-}[ld]_2
   & P_\infty \ar@{-}[lld]^2  &P_1,P_2, P_3 \ar@{-}[d]_{2} & P_1',P_2'\ar@{-}[d]_{3} \\
  \mathbb{C}(t)   & &  p  & & &  p_1 & p_2&
  }
 \]
}
\caption{Ramification diagram for the Fermat Curve}\label{FigFermat}
\end{figure}
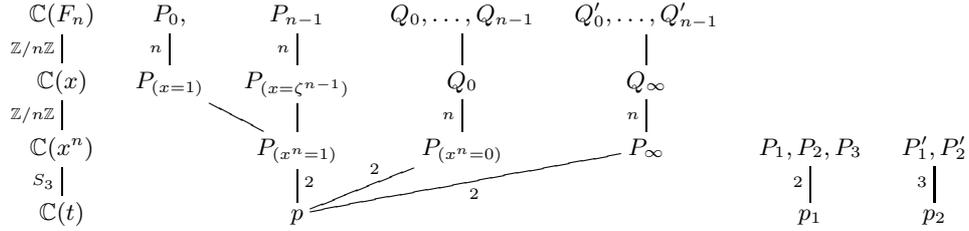

\end{proof}

Let us now consider the cover $F_n \rightarrow F_n^{\Z/n\Z \times \Z/n\Z} \cong \mathbb{P}^1$.
Consider the function field $\C(x^n)=\C(F_n)^{\Z/n\Z \times \Z/n\Z}$.
In the extension $\C(F_n)/\C(x^n)$
three places  are ramified $P_{(x^n=0)},P_{(x^n=1)},P_\infty$. We will describe now the places  of 
$\C(F_n)$ which restrict to the three points  ramified above. For this we need the projective form of the Fermat 
curve given by
\[
 X^n+Y^n=Z^n.
\]
The $n$ places $P_0,\ldots,P_{n-1}$ which  restrict to $P_{(x^n=1)}$ correspond to points with   projective coordinates 
$P_k=(\zeta^k:0:1)$ for $k=0,\ldots, n-1$. 
We have also the $n$ places $Q_0,\ldots,Q_{n-1}$
which  restrict to $P_{x^n=0}$ which correspond to points with  projective coordinates  
$Q_k=(0:\zeta^k:1)$ and the $n$ places  $Q'_0,\ldots,Q'_{n-1}$ that restrict to $P_\infty$ and correspond to points with  projective coordinates
$Q'_k=(\epsilon \zeta^k:1:0)$, $k=0,\ldots,n-1$, $\epsilon^2=\zeta$. An element  $\sigma_{a,b} \in \Z/n\Z \times \Z/n\Z$
is acting on coordinates $(X:Y:Z)$ by the following rule:
\[
 \sigma_{a,b}:(X,Y,Z) \mapsto (\zeta^a X,\zeta^b Y ,Z).
\]
We will  compute the stabilizers of the places $P_0,\ldots,P_{n-1}$, $Q_0,\ldots,Q_{n-1}$, $Q_0',\ldots, Q_{n-1}'$ ramified in 
$\C(F_n)/\C(x^n)$:
\begin{lemma} \label{stab}
 The points $(\zeta^k:0:1)$ for $k=0,\ldots,n-1$   are fixed by the cyclic group of order $n$
 generated by $\sigma_{0,1}$. The points $(0:\zeta^k:1)$, $k=0,\ldots, n-1$ are fixed by 
 the cyclic group of order $n$ generated by $\sigma_{1,0}$. Finaly the points 
 $(\zeta^k:1:0)$ for $k=0,\ldots,n-1$ are fixed by the cyclic group of order 
 $n$ generated by $\sigma_{1,1}$. 
\end{lemma}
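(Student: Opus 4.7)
The plan is to carry out a direct orbit--stabilizer computation using the explicit action formula $\sigma_{a,b}:(X:Y:Z)\mapsto(\zeta^a X:\zeta^b Y:Z)$, treating each of the three families of ramified points in turn. Since $\sigma_{a,b}\sigma_{c,d}=\sigma_{a+c,b+d}$, the stabilizer of any point in $\Z/n\Z\times\Z/n\Z$ is automatically a subgroup, and it will suffice to cut out the $(a,b)$ that fix a chosen representative.

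First I would handle $(\zeta^k:0:1)$. Applying $\sigma_{a,b}$ produces $(\zeta^{a+k}:0:1)$, and since the last homogeneous coordinate is a nonzero constant left unchanged by the action, projective equality with the original point reduces to $\zeta^{a+k}=\zeta^k$, forcing $a\equiv 0\pmod n$ with $b$ free. The stabilizer is therefore $\{\sigma_{0,b}:b\in\Z/n\Z\}\cong\Z/n\Z$, generated by $\sigma_{0,1}$. The symmetric calculation for $(0:\zeta^k:1)$ yields $b\equiv 0\pmod n$, with stabilizer generated by $\sigma_{1,0}$.

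The only step that requires a little care is $(\zeta^k:1:0)$, where the vanishing last coordinate introduces an extra projective scaling freedom. I would rescale: $\sigma_{a,b}(\zeta^k:1:0)=(\zeta^{a+k}:\zeta^b:0)=(\zeta^{a+k-b}:1:0)$, so equality with $(\zeta^k:1:0)$ forces $a\equiv b\pmod n$, and the stabilizer is the diagonal subgroup $\{\sigma_{a,a}:a\in\Z/n\Z\}$, cyclic of order $n$ and generated by $\sigma_{1,1}$.

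There is no real obstacle here; the only thing to watch is the projective normalization at the points with $Z=0$, since forgetting to rescale would wrongly suggest the stabilizer is trivial. The three cyclic stabilizers produced this way, each of order $n$, are consistent with the ramification indices in the cover $F_n\to\mathbb{P}^1$ recorded in Figure~\ref{FigFermat}.
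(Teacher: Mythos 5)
Your proof is correct and is exactly the computation the paper leaves implicit (its own proof is simply ``By computation''). Your care with the projective rescaling at the $Z=0$ points is the one step that genuinely needs attention, and by identifying the full stabilizers rather than merely exhibiting fixing subgroups you in fact prove a slightly stronger statement than the lemma asserts.
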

\begin{proof}
 By computation. 
\end{proof}

\section{Automorphism Group of the Heisenberg curve}

\begin{lemma} \label{11}
If $(n, 2)=1$, 
then 
 the cover $C_n  \rightarrow F_n$ is unramified. 
\end{lemma}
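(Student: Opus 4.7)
The plan is to exploit the tower $C_n \to F_n \to \mathbb{P}^1$ of (\ref{open-uniformization}), where all three curves arise as quotients of $\bar{\mathbb{H}}$ by subgroups of the uniformizing group. On the interior $\mathbb{H}$ the action of the relevant free group is fixed-point-free, so the three covers are unramified away from the cusps and ramification of $C_n \to F_n$ can only appear at points lying above $\{0,1,\infty\}$. By multiplicativity of ramification indices in the composition $C_n \to F_n \to \mathbb{P}^1$, unramifiedness of $C_n \to F_n$ is equivalent to requiring that, above each of the three branch points, the Galois covers $C_n \to \mathbb{P}^1$ and $F_n \to \mathbb{P}^1$ share the same ramification index.

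At a branch point, the ramification index equals the order of the local monodromy element in the corresponding Galois group. The three monodromy elements in $H_n$ are the $\phi$-images of the parabolic generators around $0$, $1$, $\infty$, namely $a_H$, $b_H$, and $(a_Hb_H)^{-1}$; their images in $H_n^{\mathrm{ab}}$ are the corresponding monodromies for $F_n \to \mathbb{P}^1$. Reading off the matrix forms in (\ref{a-b}), both $a_H$ and $b_H$ have order $n$ in $H_n$, and their classes also have order $n$ in $H_n^{\mathrm{ab}}$, so no ramification of $C_n \to F_n$ occurs above $0$ or $1$.

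The key step is the order of $a_Hb_H$ in $H_n$. Writing $a_Hb_H = I + N$ with $N$ strictly upper triangular and $N^3=0$, a short induction (or the truncated binomial expansion) yields
\[
(a_Hb_H)^k = \begin{pmatrix} 1 & k & k + \binom{k}{2} \\ 0 & 1 & k \\ 0 & 0 & 1 \end{pmatrix},
\]
so $(a_Hb_H)^n = I$ in $H_n$ if and only if $n$ divides $\binom{n}{2} = n(n-1)/2$, which happens precisely when $n$ is odd. Under the hypothesis $(n,2)=1$ the element $a_Hb_H$ therefore has order exactly $n$ in $H_n$, matching the order $n$ of its image in $H_n^{\mathrm{ab}}$, and the cover is unramified above $\infty$ as well. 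The entire argument rests on this single order computation; the parity hypothesis enters precisely to ensure that the central component of $(a_Hb_H)^n$, which lives in the factor $Z_n$ from (\ref{quo}), does not survive modulo $n$.
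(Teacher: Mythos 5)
Your proof is correct and follows essentially the same route as the paper: both arguments observe that ramification can only occur above $\{0,1,\infty\}$ and then reduce to the power formula (\ref{power-comp}), where the top-right entry of the $n$-th power involves $\frac{n(n-1)}{2}xy$, which vanishes modulo $n$ exactly when $n$ is odd. The only difference is one of packaging: you phrase the conclusion via the orders of the explicit local monodromies $a_H$, $b_H$, $a_Hb_H$ compared with their images in $H_n^{\mathrm{ab}}$, while the paper argues that a ramified point would force a cyclic stabilizer of order greater than $n$ in $H_n$, which the same order computation rules out.
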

\begin{proof}
Notice first that
\begin{equation} \label{power-comp}
 \begin{pmatrix}
  1  & x & z \\
  0  & 1 & y \\
  0  & 0 & 1
 \end{pmatrix}^\nu=
 \begin{pmatrix}
  1  & \nu x & \nu z + \frac{\nu(\nu-1)}{2}xy\\
  0  & 1 & \nu y \\
  0  & 0 & 1
 \end{pmatrix}.
\end{equation}
 If $(n,2)=1$ then by eq. (\ref{power-comp})  
every element in $H_n$ has order at most $n$. 
Notice that  the  top right corner element is 
$\nu z+\frac{\nu(\nu-1)}{2}xy$ and $2 \mid (n-1)$.


Also  the only points that can 
ramify in $C_n \rightarrow F_n$ are the points of the Fermat curve, which  lie 
above $\{0,1,\infty\}$ since outside this set the cover is unramified. 
But if such a point $P$ of the Heisenberg curve was ramified in $C_n \rightarrow F_n$, then its stabilizer $H_n(P)$ should be a cyclic group of 
order greater that $n$ and no such group exist. 
%
%
%
\end{proof}

Assume now that $(n,2)=2$. Consider the map $\pi: H_n \rightarrow H_n^{\mathrm{ab}}\cong \mathbb{Z}/n\mathbb{Z} \times \mathbb{Z}/n\mathbb{Z}$ defined 
in equation (\ref{quo}). Let $\sigma_{a,b} \in H_n^{\mathrm{ab}}$ be the automorphism corresponding to the pair $(a,b)\in \mathbb{Z}/n\mathbb{Z} \times \mathbb{Z}/n\mathbb{Z}$ with $a,b \in \mathbb{Z}/n\mathbb{Z}$.

Select an element $\alpha_{a,b}$ such that $\pi(\alpha)=\sigma_{a,b}$. Such an element has the following matrix form for some $z\in \Z/n\Z$:
\[
 \alpha_{a,b}=
 \begin{pmatrix}
  1 & a & z \\
  0 & 1 & b \\
  0 & 0 & 1
 \end{pmatrix}.
\]
By eq. (\ref{power-comp}) if $ab=0$ and $a=1$ or $b=1$ then the order $\mathrm{ord}(\alpha_{a,b})$ is at most  $n$. 
So  $\mathrm{ord}(\alpha_{1,0})=n$ and $\mathrm{ord}(\alpha_{0,1})=n$. 
Using again (\ref{power-comp}) we see that
  $\mathrm{ord}(\alpha_{1,1})=2n$.

This fact, combined to the computation of the stabilizers given in lemma \ref{stab} 
gives the following:

\begin{lemma} \label{ram-al}
If $(n,2)=2$ in   the cover $C_n \rightarrow F_n$ only the points 
$(\zeta^k:1:0)$ above $P_\infty$ can ramify with ramification index at 
most $2$.
\end{lemma}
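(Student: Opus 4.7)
The plan is to translate the ramification question for the cyclic cover $C_n \to F_n$ (with Galois group $Z_n$) into a statement about orders of elements in $H_n$, using the exact sequence (\ref{quo}) and the stabilizer computation of Lemma \ref{stab}.

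First I would note that the only points of $F_n$ which can possibly ramify in $C_n \to F_n$ are those lying above $\{0,1,\infty\}$ in $\mathbb{P}^1$, since outside these three places the composite cover $C_n \to \mathbb{P}^1$ is already unramified. By the description preceding Lemma \ref{stab}, these are precisely the $3n$ points $(\zeta^k:0:1)$, $(0:\zeta^k:1)$, $(\zeta^k:1:0)$, $k=0,\ldots,n-1$.

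Next, for a chosen $P$ from this list and a point $\tilde P$ of $C_n$ lying over $P$, the inertia group $H_n(\tilde P) \subset H_n$ is cyclic, and its image under the projection $\pi: H_n \to H_n^{\mathrm{ab}}$ is exactly the stabilizer $H_n^{\mathrm{ab}}(P)$, which by Lemma \ref{stab} is cyclic of order $n$, generated by one of $\sigma_{0,1}$, $\sigma_{1,0}$, $\sigma_{1,1}$. The ramification index of $\tilde P \to P$ in $C_n \to F_n$ is then
\[
|H_n(\tilde P) \cap Z_n| = |H_n(\tilde P)|/n,
\]
so the problem reduces to bounding the order of a cyclic lift inside $H_n$ of a generator of $H_n^{\mathrm{ab}}(P)$.

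For the first two orbits of points, any generator of $H_n(\tilde P)$ must be of the form $\alpha_{0,b,z}$ (respectively $\alpha_{a,0,z}$) with $\gcd(b,n)=1$ (respectively $\gcd(a,n)=1$). Since the product $ab$ appearing in (\ref{power-comp}) vanishes in these cases, the formula collapses and one sees immediately that the order is exactly $n$, giving ramification index $1$; these points are therefore unramified. For $P = (\zeta^k:1:0)$ the lift has the form $\gamma = \alpha_{c,c,z}$ with $\gcd(c,n)=1$, and here I would apply (\ref{power-comp}) with $ab = c^2$: requiring $\gamma^\nu = I$ forces $n \mid \nu$, and after writing $\nu = kn$ one reduces $\tfrac{kn(kn-1)}{2}c^2 \pmod n$. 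The key arithmetic point, exactly as in the analysis of $\alpha_{1,1}$ above, is that for $n$ even one has $\tfrac{n(n-1)}{2} \equiv n/2 \not\equiv 0 \pmod n$, so $k=1$ fails while $k=2$ works; hence $\gamma$ has order $2n$ and $|H_n(\tilde P)| \leq 2n$, giving a ramification index of at most $2$ at these points.

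The only step requiring any care is the last modular arithmetic check, where the parity of $n$ is used in an essential way (mirroring the hypothesis $(n,2)=2$); the rest is a mechanical consequence of the short exact sequence (\ref{quo}), the stabilizer list of Lemma \ref{stab}, and the power formula (\ref{power-comp}).
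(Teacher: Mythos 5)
Your proposal is correct and follows essentially the same route as the paper: identify the stabilizers in $H_n^{\mathrm{ab}}$ via Lemma \ref{stab}, lift a generator of each to $H_n$, and use the power formula (\ref{power-comp}) to see that the lifts over $(\zeta^k:0:1)$ and $(0:\zeta^k:1)$ have order $n$ while those over $(\zeta^k:1:0)$ have order $2n$, bounding the ramification index by the ratio of orders. Your write-up is in fact a little more complete than the paper's (you treat arbitrary generators $\alpha_{0,b},\alpha_{a,0},\alpha_{c,c}$ with $\gcd(\cdot,n)=1$ rather than just $\alpha_{0,1},\alpha_{1,0},\alpha_{1,1}$, and you make the inertia-group surjection explicit), and your computation actually yields the sharper statement, proved separately in the paper's next lemma, that the index at the cusps $(\zeta^k:1:0)$ is exactly $2$.
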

In order to understand the even $n$ case we will treat first the  $n=2$ case.
\begin{lemma}
 The Heisenberg curve $C_2$ is rational and in $C_2 \rightarrow F_2$ only two 
 points of $F_2$ are branched in the cover $C_2\rightarrow F_2$, namely $(1:1:0)$ and $(-1:1:0)$. 
\end{lemma}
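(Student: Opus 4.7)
The plan is to exploit Lemma \ref{ram-al} to restrict the branch locus, then to promote ``can ramify'' to ``does ramify'' by comparing orders of lifts, and finally to close the argument with Riemann--Hurwitz.

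First I would observe that Lemma \ref{ram-al} already produces the candidate branch set: for $n=2$ we have $\zeta = -1$, so the only points of $F_2$ possibly ramified in $C_2\to F_2$ are $(1{:}1{:}0)$ and $(-1{:}1{:}0)$, both lying over $P_\infty$. The remaining task is to promote each of them from a potentially-ramified to an actually-ramified point of index $2$, and then to evaluate the genus.

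Next I would carry out the order comparison between $H_2^{\mathrm{ab}}$ and $H_2$. By Lemma \ref{stab}, the stabilizer of each of these two points in $H_2^{\mathrm{ab}} \cong \mathbb{Z}/2 \times \mathbb{Z}/2$ is the order-$2$ cyclic group $\langle \sigma_{1,1}\rangle$. The computation made in the paragraph preceding Lemma \ref{ram-al} (via equation (\ref{power-comp})) shows that every lift $\alpha_{1,1}\in H_2$ of $\sigma_{1,1}$ has order $2n = 4$. Consequently, the inertia group in $H_2$ of any point of $C_2$ above $P_\infty$ has order $4$, while its image in $H_2^{\mathrm{ab}}$ has order $2$. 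This forces the intermediate cover $C_2 \to F_2$ to have ramification index exactly $4/2 = 2$ at each of $(1{:}1{:}0)$ and $(-1{:}1{:}0)$. In particular neither point splits, so each has a unique preimage and the branch locus of $C_2\to F_2$ is precisely these two points, as claimed.

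Finally I would apply Riemann--Hurwitz to the degree-$2$ cover $C_2\to F_2$. Using $g(F_2)=0$ and two ramification points each of index $2$:
\[
2g(C_2) - 2 \;=\; 2\bigl(2g(F_2)-2\bigr) + 2(2-1) \;=\; -4 + 2 \;=\; -2,
\]
so $g(C_2)=0$ and $C_2$ is rational.

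I do not expect any real obstacle, since all the hard work is already packaged in Lemma \ref{ram-al} and in the order computation (\ref{power-comp}). The only point requiring mild care is checking that the independence of the order of $\alpha_{1,1}$ on the choice of lift (i.e., on the central coordinate $z$) is genuine for $n=2$; but this is immediate from (\ref{power-comp}), since $(\alpha_{1,1})^2$ has top-right entry $2z+1\equiv 1\pmod 2$ irrespective of $z$, so every lift of $\sigma_{1,1}$ truly has order $4$ in $H_2$.
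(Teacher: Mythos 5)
Your proof is correct, but it establishes the key point --- that the two admissible cusps \emph{actually} ramify, not merely that they \emph{can} --- by a different mechanism than the paper. The paper argues softly and topologically: $F_2$ is rational, hence simply connected, so the degree-two cover $C_2\rightarrow F_2$ cannot be everywhere unramified; Lemma \ref{ram-al} then leaves only the two cusps above $P_\infty$ as candidates, and both must ramify (they are conjugate under the Galois group of $C_2\rightarrow \mathbb{P}^1$, and parity in Riemann--Hurwitz forces an even number of branch points anyway). You instead compute the inertia group directly: it is cyclic, surjects onto the stabilizer $\langle\sigma_{1,1}\rangle$ of order $2$ in $H_2^{\mathrm{ab}}$, hence is generated by a lift $\alpha_{1,1}$, and every such lift has order $4$ by eq.~(\ref{power-comp}); so the inertia group over $P_\infty$ has order $4$ and the intermediate index is exactly $4/2=2$. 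Your route is slightly less elementary (it uses the standard fact that inertia groups surject along intermediate Galois covers, which you leave implicit) but it is more robust: it does not depend on the rationality of $F_2$ and is in substance the same computation the paper performs in the \emph{next} lemma for general even $n$ (the elements $\sigma_j$ of order $2n$ with $\sigma_j^n\in Z_n$), so your argument essentially subsumes that case. The Riemann--Hurwitz step at the end agrees with the paper. One cosmetic remark you inherit from the paper: the points of $X^2+Y^2=Z^2$ at infinity are really $(\pm i:1:0)$ (the $Q_k'=(\epsilon\zeta^k:1:0)$ with $\epsilon^2=\zeta=-1$), and the labels $(\pm1:1:0)$ follow the looser notation of Lemma \ref{stab}; this does not affect either argument.
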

\begin{proof}
The case $n=2$ is special since the Fermat curve $F_2$ is rational.
In this case the Heisenberg  group $H_2$  has order $8$ and 
is isomorphic to the dihedral group $D_4$ 
generated by the elements
\[
a_1=
\begin{pmatrix}
 1 & 1 & 1 \\
 0 & 1 & 1 \\
 0 & 0 & 1
\end{pmatrix}  \mod 2
\mbox{ and }
a_2=
\begin{pmatrix}
 1 & 1 & 0 \\
 0 & 1 & 0 \\
 0 & 0 & 1
\end{pmatrix} \mod 2,
\]
where the order of $a_1$ is $4$ and the order of $a_2$ is $2$.
From the classification of finite subgroups of the projective line 
\cite{Val-Mad:80},
we have the following: 
The curve $F_2$ has  
the group $D_2=\Z/2\Z \times \Z/2\Z$ inside its automorphism group, while in the cover  $F_2 \rightarrow F_2/{D_2}$
three points $\{0,1,\infty\}$ are ramified with ramification indices $2$.
At least one point should ramify in the cover $C_2 \rightarrow F_2$ since 
$F_2$ is simply connected, this should be a cusp and the only cusps that are permitted 
by lemma \ref{ram-al} are $(1:1:0)$, $(-1:1:0)$ and both should ramifify. 
The genus of $C_2$ is zero by Riemann-Hurwitz formula. The ramification indices in the intermediate extensions are shown in the following table:
{\small
 \[
  \xymatrix@=1pc{
  C_2 \ar[d]^{\Z/2\Z}  \ar@/_2pc/[dd]_{D_4}
  &  &  & Q_1  \ar@{-}^{2}[d] & Q_2 \ar@{-}^{2}[d] \\
  F_2 \ar[d]^{D_2}   
    & P_1,P_2  \ar@{-}[d]_{2}& 
  P_3,P_4 \ar@{-}[d]_{2} & P_5  \ar@{-}[d]_{2}    &  P_6 \ar@{-}[dl]_{2}\\
  D_2 \backslash F_2   &  1       & 0  & \infty   & 
  }
 \]
}
Notice that the ramification type of $D_4$ acting on the rational function field is $(2,2,4)$, which is in accordance to the classification in \cite{Val-Mad:80}. 
%
%
%
\end{proof}
\begin{lemma}
If $2\mid n$ then the 
 cusps of $C_n$ of the form $(\zeta^k:1:0)$ are ramified in the 
cover $C_n \rightarrow F_n$ with ramification index equal to two. 
\end{lemma}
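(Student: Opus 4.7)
The plan is to combine the upper bound $e \leq 2$ from Lemma \ref{ram-al} with a matching lower bound $e \geq 2$, the latter coming from an order obstruction to lifting the stabilizer of $(\zeta^k:1:0)$ to a cyclic subgroup of $H_n$ of order $n$.

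First, I would fix a cusp $P_k = (\zeta^k:1:0)$ of $F_n$ and a preimage $\tilde P$ of $P_k$ in $C_n$, and set $S := \mathrm{Stab}_{H_n}(\tilde P)$, which is cyclic as the stabilizer at a point of a Galois cover of Riemann surfaces. By Lemma \ref{stab}, $\mathrm{Stab}_{H_n^{\mathrm{ab}}}(P_k) = \langle\sigma_{1,1}\rangle$ has order $n$. Multiplicativity of ramification indices in the tower $C_n \to F_n \to \mathbb{P}^1$ (equivalently, orbit-counting for $H_n$ acting on the fibre over $P_\infty \in \mathbb{P}^1$) then gives $|S| = n \cdot e$, where $e := |S \cap Z_n|$ is the ramification index of $C_n \to F_n$ at $\tilde P$; dividing by $|S \cap Z_n|$ also forces $\pi(S) = \langle\sigma_{1,1}\rangle$ surjectively (not merely a subgroup thereof).

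Next, I would argue by contradiction, supposing $e = 1$. Then $\pi|_S$ is an isomorphism $S \cong \langle\sigma_{1,1}\rangle$, so $S$ must contain an element $\alpha$ of order $n$ projecting to $\sigma_{1,1}$; any such lift has the matrix form
\[
\alpha = \begin{pmatrix} 1 & 1 & z \\ 0 & 1 & 1 \\ 0 & 0 & 1 \end{pmatrix}
\]
for some $z \in \Z/n\Z$. Equation (\ref{power-comp}) with $\nu = n$ then yields $\alpha^n$ with top-right entry $nz + \frac{n(n-1)}{2} \equiv \frac{n(n-1)}{2} \pmod n$. Since $n$ is even and $n-1$ is odd, this equals $n/2 \not\equiv 0 \pmod n$, so $\alpha^n$ is the order-$2$ element of $Z_n$, contradicting $\alpha^n = 1$. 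Hence $e \geq 2$, and Lemma \ref{ram-al} forces $e = 2$.

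The only delicate step is pinning down $\pi(S) = \langle\sigma_{1,1}\rangle$ rather than a proper subgroup; the ramification-multiplicativity count disposes of this cleanly. The remainder is the explicit order computation in $H_n$, which immediately exhibits the obstruction to unramifiedness when $2 \mid n$.
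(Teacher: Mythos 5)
Your proof is correct and follows essentially the same route as the paper's: both arguments establish that the stabilizer of a point above $(\zeta^k:1:0)$ surjects onto $\langle\sigma_{1,1}\rangle$ and then invoke the computation from equation (\ref{power-comp}) showing that the $n$-th power of any lift of $\sigma_{1,1}$ to $H_n$ is the nontrivial order-two element of $Z_n$ when $2\mid n$. The only difference is presentational (you argue by contradiction from $e=1$ and justify surjectivity via multiplicativity of ramification indices, which is arguably cleaner than the paper's orbit-chasing), so the mathematical content coincides.
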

\begin{proof}

Observe now that the elements of order $2n$
\[
\sigma_j:=\begin{pmatrix}
 1 & 1 & j \\
 0 & 1 & 1 \\
 0 & 0 & 1
\end{pmatrix}
\]
all restrict to $\sigma_{1,1} \in \mathrm{Aut}(F_n)$.
Consider the points $Q_1',\ldots,Q_n'$ of $F_n$ that are above $\infty$ and for a fixed point $Q_{\nu_0}'$ consider the set of elements $\bar{Q}_{\nu_0,j}$ $j=1,\ldots,t$ extending $Q'_{\nu_0}$ for $i=1,\ldots,n$.
Select now  a point $\bar{Q}_{\nu_0,\mu_0}$ among them. If $\sigma_0$ does not fix $\bar{Q}_{\nu_0,\mu_0}$ the it moves it to the point $\bar{Q}_{\nu_0,j}$. But then there is an element $\tau \in \mathrm{Gal}(C_n,F_n)=Z_n$ moving $\bar{Q}_{\nu_0,\mu_0}$. Therefore its fixed by a matrix $\sigma_j$ for an appropriate $j$. We compute 
\[
\sigma_j^{n}=
\begin{pmatrix}
 1 & 1 & j \\
 0 & 1 & 1 \\
 0 & 0 & 1
\end{pmatrix}^n
=
\begin{pmatrix}
 1 & 0 & -n/2 \\
 0 & 1 & 0 \\
 0 & 0 & 1
\end{pmatrix} \in Z_n.
\]
This means that in the cover $C_n \rightarrow (\Z/n\Z \times \Z/n\Z) \backslash F_n$ the point $\bar{Q}_{\nu_0,\mu_0}$ is ramified with ramification index $2n$. 



\end{proof}

\begin{lemma}
 The genus $g_{C_n}$ of the curve $C_n$ equals:
 \[
  g_{C_n}=\left\{
  \begin{array}{ll}
   \frac{n^2(n-3)}2+1 & \mbox{ if } (n,2)=1  \\
   \frac{n^2(n-3)}2 + \frac{n^2}{4}+1                  & \mbox{ if } 2 \mid n
  \end{array}
  \right.
 \]
\end{lemma}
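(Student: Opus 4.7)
The plan is to compute $g_{C_n}$ via the Riemann-Hurwitz formula applied to the Galois cover $\pi_Z \colon C_n \to F_n$ whose group is $Z_n$, so of degree $n$. The starting ingredient is the genus of the Fermat curve, $g_{F_n} = \frac{(n-1)(n-2)}{2}$, which is standard. The ramification data for $\pi_Z$ has already been worked out in the preceding lemmas, so only an arithmetic cleanup remains.

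For odd $n$, Lemma \ref{11} says $\pi_Z$ is unramified, so Riemann-Hurwitz reads
\[
2g_{C_n} - 2 = n\bigl(2g_{F_n} - 2\bigr) = n\bigl((n-1)(n-2)-2\bigr) = n(n^2-3n) = n^2(n-3),
\]
which gives $g_{C_n} = \frac{n^2(n-3)}{2}+1$.

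For even $n$, by Lemma \ref{ram-al} and the subsequent lemma the cover $\pi_Z$ is ramified exactly above the $n$ cusps $(\zeta^k:1:0) \in F_n$, each with ramification index $2$. Since $\pi_Z$ has degree $n$, each such branch point of $F_n$ has $n/2$ preimages in $C_n$, each contributing $e-1 = 1$ to the different. Thus the total ramification contribution is $n \cdot \frac{n}{2} \cdot 1 = \frac{n^2}{2}$, and Riemann-Hurwitz gives
\[
2g_{C_n} - 2 = n\bigl((n-1)(n-2) - 2\bigr) + \frac{n^2}{2} = n^2(n-3) + \frac{n^2}{2},
\]
so $g_{C_n} = \frac{n^2(n-3)}{2} + \frac{n^2}{4} + 1$.

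There is no real obstacle beyond bookkeeping, since the prior lemmas fully determine the ramification behaviour of $\pi_Z$; the only care needed is to verify that, in the even case, the $n$ cusps $(\zeta^k:1:0)$ are indeed the \emph{only} branch points (the other cusps $(\zeta^k:0:1)$ and $(0:\zeta^k:1)$ being unramified by the order computations $\mathrm{ord}(\alpha_{1,0})=\mathrm{ord}(\alpha_{0,1})=n$ in the proof of Lemma \ref{ram-al}), and that in the odd case Lemma \ref{11} really does cover all three families of cusps of $F_n$.
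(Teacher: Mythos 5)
Your proposal is correct and follows essentially the same route as the paper: Riemann--Hurwitz for the degree-$n$ cover $C_n \to F_n$, using Lemma \ref{11} for the unramified odd case and the ramification of the $n$ cusps $(\zeta^k:1:0)$ with index $2$ (hence $n^2/2$ points above them contributing $n^2/2$ to the different) in the even case. The arithmetic and the bookkeeping of which cusps ramify match the paper's proof.
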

\begin{proof}
 If $(n,2)=1$, then the cover $C_n \rightarrow F_n$ is unramified with Galois
 group $Z_n=\Z/n\Z$ and Riemann-Hurwitz formula implies that 
 \[
  2g_{C_n}-2=n(2g_{F_n}-2).
 \] 
 We know that $g_{F_n}=\frac{(n-1)(n-2)}{2}$ and this gives the result in this case. 
 
 If $2 \mid n$ then in the cover $C_n \rightarrow F_n$  $n$ cusps of the 
 Fermat curve are ramified with ramification index $2$. These cusps have 
 $\frac{ n^2}{2}$ points  in total above them, so Riemann-Hurwitz in this case gives
 \begin{equation}
 \label{g2Cn}
  2g_{C_n}-2=n (2g_{F_n}-2) +\frac{n^2}{2},
 \end{equation}
and the desired result follows.
\end{proof}

We have the following diagram:
\[
\xymatrix{
C_n \ar[d] &   P_{1,j},\ldots P_{n,j}  \ar@{-}[d] &   Q_{1,j},\ldots Q_{n,j}  \ar@{-}[d] &  Q'_{1,j},\ldots Q'_{n/e,j}  \ar@{-}^{e= 1 \mbox{ or } 2} [d]  \\
F_n \ar[d] &   P_j \ar@{-}[d] & Q_ j  \ar@{-}[d]  &  Q_j' \ar@{-}[d] \\
\mathbb{P}^1\cong \frac{F_n}{\mathbb{Z}/n\mathbb{Z} \times \mathbb{Z}/n\mathbb{Z}}  & 0 & 1 & \infty
}
\]
where $j=0,\ldots,n-1$ and $e$ is $1$ or $2$ according to the value of $n \mod 2$.

\section{Modular automorphisms}

\subsection{The Fermat curve}
 The open Fermat curve is the curve $F_n^0:=\ker \psi
 \backslash \mathbb{H}$, 
 where 
 \[
 \mathrm{ker}  \psi:=\langle a^n,b^n ,[a,b] \rangle\subset \langle a,b \rangle=\Gamma(2). 
 \]
 Notice that the above group differs from the universal covering group $\Delta(n,n,n)'$ given in section \ref{sec:triangle} which correspond to the closed Fermat curve.  

 Every automorphism of the Fermat curve is modular. 
 Indeed, the generators $\sigma_{1,0}$, $\sigma_{0,1}$ of the $\Z/N\Z \times \Z/N\Z$
 part of the automorphism group, are coming from the deck transformations 
 $a,b \in 
 \pi^1(\mathbb{P}^1 \backslash \{0,1,\infty\})
 =\Delta \subset \mathrm{SL}(2,\mathbb{Z}) \subset  \mathrm{SL}(2,\mathbb{R})$. 
 On the other hand it is known \cite[exer. 3 p. 32]{MR1453580} that 
   $S_3=\mathrm{SL}(2,\Z)/\Gamma(2)$ and the action is given by lifts of elements of $S_3$ to 
 $\mathrm{SL}(2,\Z)$. The fact that all automorphisms are indeed modular comes from the fact that $\mathrm{SL}(2,\Z)$ leaves both the set $\mathbb{H}$ and the cusps $\mathbb{P}^1$ invariant.

Let $\mathbf{F}_n$ denote the free group in $n$ generators. Notice that the  group $S_3$ acts by conjugation on $\Delta$ so it can be seen as 
 a subgroup of the group of outer automorphisms of $\Delta$. 
 It is known \cite[exam. 1 p. 117]{BirmanBraids}, \cite[th. 3.1.7 p. 125]{bogoGrp} that the  epimorphism 
 \[
\mathbf{F}_2 \rightarrow \mathbf{F}_2^{\mathrm{ab}}\cong \Z \times \Z
 \]
 induces an isomorphism of 
 \[
  \mathrm{Out}(\mathbf{F}_2)/
  \mathrm{In}(\mathbf{F}_2) =\A (\mathbf{F}_2)  \rightarrow \mathrm{GL}(2,\Z).
 \]
The group $S_3$ is generated by the 
folowing automorphisms of the free group $\mathbf{F}_2=\langle a,b\rangle$:
\begin{equation}
\label{i-involution}
i_1: a\leftrightarrow b \qquad i_2: 
\begin{array}{l}
a \mapsto b^{-1} a^{-1}\\
b \mapsto b
\end{array}
\end{equation}
The above generators $i_1,i_2$ of $S_3$ keep the group $\mathrm{ker}\psi$ invariant.

On the other hand, an arbitrary element 
of $S_3$ reduces to an action on the Fermat curve $F_n$ by permutation of the variables $X,Y,Z$ in the projective model of the 
curve.    
The automorphism intechanging $X,Y$ coressponds to the involution interchanging $a,b$.
Let us now consider the automorphism $\bar{\tau}$ interchanging $X,Z$ in the projective model the 
Fermat curve.

Denote by $\sigma_{i,j}$ the automorphism of the Fermat curve sending 
\[
 \sigma_{i,j}: (X:Y:Z) \mapsto (\zeta^i X :\zeta^j Y:Z).
\]
By computation we have 
\[
 \bar{\tau} \sigma_{i,j} \bar{\tau}^{-1}:(X:Y:Z) \mapsto (X:\zeta^jY: \zeta^i Z)=(\zeta^{-i}X: \zeta^{-i+j} Y: Z).
\]
Therefore, the conjugation action of $\bar{\tau}$ on 
$\mathrm{GL}(2,\Z/n\Z)=\A(\Z/n\Z \times \Z/n\Z)$ is given 
by the matrix
 $\begin{pmatrix}
   -1 & -1 \\
   0 & 1
  \end{pmatrix}$.

\subsection{The Heisenberg curve}
We will now describe the automorphisms of the Fermat curves $F_n$ that 
can be lifted to automorphisms of $C_n$. Every representative $\sigma \in \A(\mathbb{H})$  of an 
element   $\bar{\sigma} \in \A(F_n)=N_{\mathrm{SL}(2,\mathbb{R})}(\ker \psi)/\ker \psi$
should keep the group $\ker \psi$ invariant when acting by conjugation. 
The subgroup of the automorphism group of $F_n$ should  
also keep the group 
\[
 \mathrm{ker}(\phi)=\langle a^n,b^n,[a,b]^n \rangle 
\]
invariant, in order to extend to an automorphism of $C_n$.

The elements $a,b$ generating $\Z/n\Z \times \Z/n\Z$ modulo $\mathrm{ker}\psi$ keep both groups $\ker\phi,\ker\psi$ invariant, 
so $\Z/n\Z \times \Z/n\Z < \A(F_n)$ is lifted to a subgroup of automorphisms of $C_n$.  

Let $\sigma \in \Gamma$ be a representative of 
an element  $\bar{\sigma} \in S_3 \subset \A(F_n)$.
This element is lifted to an automorphism of the group $C_n$ 
if and only if the conjugation action of $\sigma$ keeps the defining group $\ker\phi$
invariant. 

Let $i_1,i_2$ be the involutions generating $S_3$ as defined in eq. (\ref{i-involution}).
Checking that the involution  $i_1$ keeps $\ker \phi$ invariant is trivial. 

We will now check the involution $i_2$:
It is clear that $(b^n)^{i_2}=b^{i_2}$ and
$[a,b]^{i_2}=[b^{-1},a^{-1}]$ so $\left( [a,b]^n \right)^{i_2}=[b^{-1},a^{-1}]^n$.

Let $a_H,b_H$ be the generators of the Heisenberg group, seen as elements in $F_2=\langle a_H,b_H \rangle / \ker \phi$ as given in eq. (\ref{a-b}). In order to check whether the generator $a^n$ of $\ker \phi$ are sent to $\ker \phi$ under the action of $i_2$ it is enough to prove that its image modulo $\ker \phi$ is the  zero element in the Heisenberg group. 
We compute 
\[
a_H^{i_2} =b_H^{-1} a_H^{-1} \mbox{ and } b_H^{i_2} =b_H.
\]
Therefore
\[
  a_H^{i_2}  =
 \begin{pmatrix}
  1 & 0 & 0 \\
  0 & 1 & -1 \\
  0 & 0 & 1 
 \end{pmatrix}
\begin{pmatrix}
  1 & -1 & 0 \\
  0 & 1 & 0 \\
  0 & 0 & 1 
 \end{pmatrix}=
 \begin{pmatrix}
  1 & -1 & 0 \\
  0 & 1 & -1 \\
  0 & 0 & 1 
 \end{pmatrix}
\]
so by eq. (\ref{power-comp}) we have 
\[
 \left( a_H^{i_2} \right)^n=
 \left\{
 \begin{array}{ll}
 \begin{pmatrix}
  1 & 0 & -\frac{n}{2} \\
  0 & 1 & 0 \\
  0 & 0 & 1 
 \end{pmatrix} & \mbox{ if } n\equiv 0 \mod 2 \\
 \begin{pmatrix}
  1 & 0 & 0 \\
  0 & 1 & 0 \\
  0 & 0 & 1 
 \end{pmatrix} & \mbox{ if } (n,2)=1
 \end{array}
 \right.
\]
and this gives the identity matrix if and only if $(n,2)=1$. Therefore in the case  $2 \mid n$ the element $(a^n)^{i_2}$ does not belong to the group $\ker \phi$.

 

\begin{lemma}
 The modular automorphism group for the Heisenberg curve is given by an extension 
 \[
  1 \rightarrow \Z/n\Z \rightarrow \A^{\mathrm{m}}(C_n) \rightarrow G_n \rightarrow 1
 \]
where $G_n$ is the group
 \[
 G_n=
 \left\{ 
 \begin{array}{ll}
  \A(F_n)& \mbox { if } (n,2)=1  \\
  (\Z/n\Z\times \Z/n\Z) \rtimes \Z/2\Z & \mbox{ otherwise}
  \end{array}
  \right.
 \]
\end{lemma}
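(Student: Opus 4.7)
The plan is to apply Proposition \ref{rest-ext} to the normal pair $\ker\phi \lhd \ker\psi$, so that the cover $C_n \to F_n$ has Galois group $Z_n = \ker\psi/\ker\phi \cong \Z/n\Z$. This yields a restriction homomorphism $r : \A^{\mathrm{m}}(C_n) \to \A^{\mathrm{m}}(F_n)$, and the lemma amounts to identifying $\ker(r)$ and $\mathrm{im}(r) = G_n$. The kernel consists of those classes in $N_{\mathrm{SL}(2,\mathbb{R})}(\ker\phi)/\ker\phi$ which act trivially on $F_n = \ker\psi\backslash\mathbb{H}$, hence equals $\ker\psi/\ker\phi = Z_n \cong \Z/n\Z$. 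This gives the left term of the desired extension.

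For the image $G_n$, Proposition \ref{rest-ext} shows that a class $\sigma\ker\psi \in \A^{\mathrm{m}}(F_n)$ lifts if and only if $\sigma \ker\phi \sigma^{-1} \subset \ker\phi$ (a condition well-defined modulo $\ker\psi$ because $\ker\phi$ is in fact normal in $\Gamma(2)$, hence in $\ker\psi$). I would then use the description $\A(F_n) = \A^{\mathrm{m}}(F_n) = (\Z/n\Z \times \Z/n\Z) \rtimes S_3$ recalled in Section 5.1, where the abelian part is generated by the classes of $a,b \in \Gamma(2)$ and $S_3$ is realized by the outer action through the involutions $i_1,i_2$ of (\ref{i-involution}) coming from $\mathrm{SL}(2,\Z)/\Gamma(2)\cong S_3$. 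Since $\ker\phi \lhd \Gamma(2)=\langle a,b\rangle$, conjugation by $a$ and $b$ preserves $\ker\phi$ automatically, so the entire $\Z/n\Z\times\Z/n\Z$ lifts.

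Next I would carry out the $S_3$ check using the matrix calculations already displayed in the excerpt. The involution $i_1$ swaps $a$ and $b$, hence permutes the symmetric generating set $\{a^n,b^n,[a,b]^n\}$ of $\ker\phi$ (up to inverses and cyclic conjugates), so it lifts for every $n$. For $i_2$, the image of $a^{i_2}=b^{-1}a^{-1}$ in $H_n$ is $b_H^{-1}a_H^{-1}$, and the computation using (\ref{power-comp}) in the excerpt shows that its $n$-th power equals the identity of $H_n$ when $(n,2)=1$, but equals the nonzero central element with $-n/2$ in the upper-right corner when $n$ is even. Therefore $(a^n)^{i_2}\in\ker\phi$ if and only if $(n,2)=1$; the analogous conditions for $b^n$ and $[a,b]^n$ are easier and give no further obstruction. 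Assembling: for odd $n$ the full $S_3$ lifts and $G_n=\A(F_n)$; for even $n$ only $\langle i_1\rangle\cong\Z/2\Z$ lifts, so $G_n=(\Z/n\Z\times\Z/n\Z)\rtimes \Z/2\Z$.

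The main obstacle I anticipate is bookkeeping: one must confirm that the outer-automorphism description of $S_3$ faithfully tracks the actual lift condition in $\mathrm{SL}(2,\mathbb{R})$, i.e.\ that a representative of $i_j$ normalizes $\ker\phi$ in $\mathrm{SL}(2,\mathbb{R})$ precisely when the induced outer automorphism of $\Gamma(2)$ preserves $\ker\phi$ as a subgroup, and that replacing such a representative by a $\Gamma(2)$-translate does not affect the conclusion. Both facts are immediate from the normality $\ker\phi\lhd\Gamma(2)$, after which the exact sequence and the identification of $G_n$ follow from the kernel/image analysis above.
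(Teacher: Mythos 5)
Your lifting analysis (which automorphisms of $F_n$ extend to $C_n$) is essentially the paper's argument and is correct: $\Z/n\Z\times\Z/n\Z$ lifts because $\ker\phi\lhd\Gamma(2)$, $i_1$ lifts always, and the computation of $\left(a_H^{i_2}\right)^n$ via (\ref{power-comp}) shows $i_2$ lifts exactly when $(n,2)=1$. The kernel identification $\ker(r)=\ker\psi/\ker\phi\cong\Z/n\Z$ is also fine.

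The genuine gap is at the very first step: you assert that Proposition \ref{rest-ext} ``yields a restriction homomorphism $r:\A^{\mathrm{m}}(C_n)\to\A^{\mathrm{m}}(F_n)$'' defined on \emph{all} of $\A^{\mathrm{m}}(C_n)$. Proposition \ref{rest-ext} only says that $\tau\in N_{\mathrm{SL}(2,\mathbb{R})}(\ker\phi)$ descends to $F_n$ \emph{if and only if} $\tau$ also normalizes $\ker\psi$; it does not give you that inclusion of normalizers for free, and nothing in your write-up establishes it ($\ker\psi$ is generated by $\ker\phi$ together with $[a,b]$, and a priori an element normalizing $\langle a^n,b^n,[a,b]^n\rangle$ need not preserve the extra generator $[a,b]$). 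Without this step you have only shown that the subgroup of $\A^{\mathrm{m}}(C_n)$ consisting of automorphisms that \emph{do} descend fits into the stated extension; $\A^{\mathrm{m}}(C_n)$ could in principle be strictly larger, with quotient bigger than $G_n$. The paper spends the second half of its proof on precisely this point: it invokes Nielsen's theorem \cite[th.~3.9]{MagKarSol}, that any automorphism $\sigma$ of the free group $\langle a,b\rangle$ satisfies $[a,b]^\sigma=T[a,b]^{\pm1}T^{-1}$ for some word $T$ in $a,b$, so that the normal subgroup $\langle a^n,b^n,[a,b]\rangle=\ker\psi$ is preserved whenever $\langle a^n,b^n,[a,b]^n\rangle=\ker\phi$ is. You need to supply this (or an equivalent argument, e.g.\ via a characteristic-subgroup observation) before the exact sequence as stated is proved.
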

\begin{proof}
The Heisenberg group is already in the automorphism group, fitting in the short exact sequence
\[
\xymatrix{
1 \ar[r] & \Z/n\Z \ar[r] \ar@{=} [d] 
& \mathrm{Aut}^{\mathrm{m}} (C_n) \ar[r] & G_n \ar[r] & 1 
\\
1    
\ar[r] & \Z/n\Z \ar[r]  & H_n \ar[r] \ar@{^{(}->}[u]
 & \Z/n\Z \times \Z/n\Z \ar[r] 
\ar@{^{(}->}[u]
 &
1
}
\]
We have computed the part of $\A(F_n)$ which lifts to automorphisms of $C_n$ according to 
 the value of $n$ modulo $2$. 

We will now prove that every modular 
 automorphism of $C_n$ restricts 
 to an automorphism of $F_n$. Indeed, using proposition \ref{rest-ext}  we have to show that every automorphism $\sigma$  that 
 fixes by conjugation elements of $\langle a^n,b^n, [a,b]^n\rangle$ fixes elements of $\langle a^n,b^n,[a,b]\rangle$ as well. 
 
 Assume that $\sigma$ fixes the group $\langle a^n,b^n, [a,b]^n\rangle$.
 If $(a^n)^\sigma$ is a word in $a^n,b^n,[a,b]^n$ then it is obviously a word 
 in $a^n,b^n,[a,b]$. For the commutator we will use the following result due to 
 Nielsen \cite[th. 3.9]{MagKarSol}
 \[
  [a,b]^\sigma= T [a,b]^{\pm 1} T^{-1},
 \]
where $T$ is a word in $a,b$. So the invariance of the commutator $[a,b]$ under 
the action of the outer automorphism $\sigma$ follows since $\langle a^n,b^n, [a,b]\rangle$
is a normal subgroup of $\langle a,b\rangle$.
\end{proof}
\begin{lemma}
For $2\mid n$ three points are ramified in $C_n\mapsto \mathrm{Aut}^{\mathrm{m}}(C_n)$, with ramification indices $4n,n,2$. 
\end{lemma}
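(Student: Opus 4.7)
The plan is to exploit the tower
\[
C_n \longrightarrow F_n \longrightarrow \mathbb{P}^1 = F_n/(\Z/n\Z \times \Z/n\Z) \longrightarrow \mathbb{P}^1/\langle i_1\rangle \cong C_n/\A^{\mathrm m}(C_n),
\]
where the bottom cover of degree $2$ is induced by the involution $i_1 \colon (X{:}Y{:}Z)\mapsto (Y{:}X{:}Z)$, which is the only non-trivial element of $S_3$ surviving in $G_n=(\Z/n\Z\times\Z/n\Z)\rtimes\Z/2\Z$ by the previous lemma. Since $|\A^{\mathrm m}(C_n)|=n\cdot n^2\cdot 2=2n^3$, the bottom object is a rational curve, and the ramification of $C_n\to C_n/\A^{\mathrm m}(C_n)$ above each of its branch points can be obtained by multiplying through the three layers of the tower.

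First I would identify the action of $i_1$ on the base. Using the coordinate $t=x^n$ on $\mathbb{P}^1=F_n/(\Z/n\Z)^2$ and the defining relation $x^n+y^n=1$, the involution $x\leftrightarrow y$ descends to the involution $t\mapsto 1-t$ of $\mathbb{P}^1$. Its fixed points on $\mathbb{P}^1$ are $t=1/2$ and $t=\infty$, while the two ramification points $t=0$ and $t=1$ of $F_n\to\mathbb{P}^1$ get identified. Hence $C_n/\A^{\mathrm m}(C_n)$ has exactly three candidate branch points:
\[
R_1=\text{image of }\{t=0, t=1\},\qquad R_2=\text{image of }t=\infty,\qquad R_3=\text{image of }t=1/2.
\]

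Next, at each $R_i$ I would compute the ramification index as a product. Above $R_1$, the cover $\mathbb{P}^1\to\mathbb{P}^1/\langle i_1\rangle$ is unramified, $F_n\to\mathbb{P}^1$ contributes ramification index $n$ at $t=0,1$, and by Lemma \ref{ram-al} the cover $C_n\to F_n$ is unramified at the points $Q_k,P_k$, giving total index $n$. Above $R_3$, neither $F_n\to\mathbb{P}^1$ nor $C_n\to F_n$ contributes, and only $\mathbb{P}^1\to\mathbb{P}^1/\langle i_1\rangle$ ramifies with index $2$. Above $R_2$ all three covers ramify: index $n$ from $F_n\to\mathbb{P}^1$ at $t=\infty$, index $2$ from $C_n\to F_n$ at the cusps $(\zeta^k{:}1{:}0)$ (the preceding lemma), and index $2$ from $\mathbb{P}^1\to\mathbb{P}^1/\langle i_1\rangle$. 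Multiplying gives $4n$, so the three indices are $(n,4n,2)$.

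The main subtlety, and the only step requiring care, is checking that $t\mapsto 1-t$ is really the induced action of $i_1$ on the base and that no further branch points appear; i.e.\ that $C_n\to F_n$ is unramified at the fibres over $t=1/2$ (automatic, since the only branch locus of $C_n\to F_n$ sits over $t=\infty$). As a sanity check, Riemann--Hurwitz for a degree $2n^3$ Galois cover with ramification data $(n,4n,2)$ yields
\[
2g_{C_n}-2 = -4n^3 + n^3 + \tfrac{n^2(4n-1)}{2} + 2n^2(n-1),
\]
which simplifies to $g_{C_n}=\tfrac{n^2(n-3)}{2}+\tfrac{n^2}{4}+1$, matching the genus formula already established.
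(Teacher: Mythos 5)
Your proposal is correct and takes essentially the same route as the paper: both pass through the tower $C_n \to F_n \to \mathbb{P}^1$ with ramification data $(n,n,2n)$ over $\{0,1,\infty\}$ and then quotient by the induced involution $t\mapsto 1-t$, which merges $0$ and $1$ and fixes $\infty$ and $1/2$, yielding indices $(n,4n,2)$. The paper states this more tersely; your explicit multiplicativity bookkeeping and the Riemann--Hurwitz consistency check against the genus formula are additional detail, not a different method.
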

\begin{proof}
In the cover $C_n \rightarrow F_n \rightarrow (\mathbb{Z}/n\Z \times \mathbb{Z}/n\Z) \backslash F_n \cong \mathbb{P}^1$
we have ramification on the points $P_{x=0},P_{x=1}$ and $P_\infty$, with ramification indices $n,n,2n$. 

Consider the group $S_3$ acting on $(\mathbb{Z}/n\Z \times \mathbb{Z}/n\Z) \backslash F_n$ and generated by the automorphisms $x\mapsto f(x)$, where 
\[
f(x) \in 
\left\{
x,1-x,\frac{1}{x}, \frac{1}{1-x}, \frac{x}{x-1}, 
\frac{x-1}{x}
\right\}.
\]
The involution $i_1:a \leftrightarrow b$ in terms of generators of the free group, corresponds to the  involution $x\mapsto 1-x$, which sends $1 \leftrightarrow 0$ and keeps $\infty, 1/2$ invariant. 
Therefore, the three ramified points have ramification indices $2,n,4n$. 
\end{proof}

\begin{theorem}
 Every automorphism of $C_n$ is modular, i.e., sends 
 the cusps to the cusps. In particular the automorphism group of the 
 curve $C_n$ in this case equals:
 \[
  \A(C_n)=N_{\mathrm{SL}(2,\mathbb{R})}(\ker \phi)/\ker \phi.
 \]
\end{theorem}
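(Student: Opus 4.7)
The plan is to pass from the open-curve uniformization $\ker\phi\backslash\mathbb{H}$ to the cocompact triangle-group uniformization of $C_n$ (indicated in Section~\ref{sec:triangle}), so that \emph{every} automorphism of the compact Riemann surface $C_n$ can be read as a normalizer quotient in $\mathrm{PSL}(2,\mathbb{R})$, and then to match this quotient with the modular automorphism group already computed.

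The first step is to realize $C_n$ as $D(n)\backslash \mathbb{H}$, where $D(n)$ is the cocompact torsion-free Fuchsian group fitting into $D(n)\lhd \Delta(n,n,n)'\lhd \Delta(n,n,n)\lhd \Delta(2,3,2n)$ (when $(n,2)=1$; see Lemma \ref{11} and the discussion following diagram (\ref{triangle})). Since $C_n$ has genus $\geq 2$ for $n\geq 3$, the standard theory (cf.\ \cite{Kallel2005-ru}) gives
\[
\A(C_n)=N_{\mathrm{PSL}(2,\mathbb{R})}(D(n))/D(n),
\]
and the normalizer $N:=N_{\mathrm{PSL}(2,\mathbb{R})}(D(n))$ is itself a Fuchsian group containing $\Delta(n,n,n)'$. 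By the Greenberg--Singerman classification of normal inclusions of cocompact Fuchsian groups (\cite{Greenberg1963-tf},\cite{Singerman1972-vp},\cite[table 2]{Kallel2005-ru}) the only Fuchsian overgroups of $\Delta(n,n,n)'$ lie in the finite list topped by $\Delta(2,3,2n)$, and this triangle group is maximal in $\mathrm{PSL}(2,\mathbb{R})$ for $n\geq 4$. Hence $N$ is contained in $\Delta(2,3,2n)$.

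The second step is a cardinality comparison. The index $[\Delta(2,3,2n):D(n)]$ equals $|\A^{\mathrm{m}}(C_n)|$ as computed in the preceding lemma, so the inclusion $\A^{\mathrm{m}}(C_n)\subseteq \A(C_n)\subseteq N/D(n)$ must be an equality, giving $\A(C_n)=\A^{\mathrm{m}}(C_n)$. Modular automorphisms visibly send cusps to cusps (the cusps are exactly the orbits of parabolic fixed points of $\Gamma(2)$ modulo $\ker\phi$, and a modular automorphism is induced by an element of $\mathrm{SL}(2,\mathbb{Z})$ acting on $\mathbb{P}^1(\mathbb{Q})$), so the theorem follows.

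The main obstacle is the even case $2\mid n$, where Lemma \ref{ram-al} shows that the cover $C_n\to F_n$ is ramified at the cusps above $\infty$, so $D(n)$ no longer sits normally inside $\Delta(n,n,n)'$ in the same clean way. I would handle this by computing the signature of the candidate Fuchsian overgroup directly from the ramification data of the previous lemma (three branch points with indices $4n,n,2$) and matching this signature against Singerman's finite enumeration; the only admissible entry is the triangle group $\Delta(2,n,4n)$ (or the relevant companion in the $2\mid n$ row of the table), whose index over $D(n)$ again equals $|\A^{\mathrm{m}}(C_n)|$, so the same argument closes the case.
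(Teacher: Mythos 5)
Your odd-$n$ argument is essentially the paper's: realize $C_n$ as $D(n)\backslash\mathbb{H}$, show the normalizer $N$ of $D(n)$ is a triangle group, and pin it down as $\Delta(2,3,2n)$ by comparing $[\Delta(2,3,2n):D(n)]=6n^3$ with $|\A^{\mathrm{m}}(C_n)|$. Two repairs are needed, though. First, normality is not transitive, so the chain $D(n)\lhd\Delta(n,n,n)'\lhd\Delta(n,n,n)$ does not by itself give $D(n)\lhd\Delta(n,n,n)$; what makes it true is that for odd $n$ the element $a_Hb_H$ has order $n$, so $\phi$ descends to a surjection $\Delta(n,n,n)\to H_n$ with kernel $D(n)$. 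Second, ``the only Fuchsian overgroups of $\Delta(n,n,n)'$'' is not something Greenberg--Singerman classifies: a surface group has many Fuchsian overgroups, and knowing only that $N$ contains the surface group $\Delta(n,n,n)'$ does not make $N$ a triangle group. The step you actually need (and the one the paper uses) is that $N$ contains the \emph{triangle} group $\Delta(n,n,n)$ with finite index, whence $N$ is itself a triangle group containing $\Delta(n,n,n)$; Singerman's list then leaves only $\Delta(3,3,n)$, $\Delta(2,n,2n)$ and $\Delta(2,3,2n)$, and the index count forces $N=\Delta(2,3,2n)$. (Also, your blanket claim that $g_{C_n}\geq 2$ for $n\geq 3$ fails at $n=3$, where $C_3$ is elliptic; the argument really starts at $n\geq 4$ for odd $n$.)

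For $2\mid n$ you genuinely diverge from the paper. The paper abandons the triangle-group method there and instead bounds $m=[\A(C_n):\A^{\mathrm{m}}(C_n)]$ by a Riemann--Hurwitz case analysis on $\Omega_n=-2+\sum_i(1-1/e_i)$, showing $m<2$. You propose to stay with triangle groups: since the preceding lemma exhibits $C_n\to C_n/\A^{\mathrm{m}}(C_n)\cong\mathbb{P}^1$ branched over exactly three points with indices $2,n,4n$, the group uniformizing that quotient orbifold is $\Delta(2,n,4n)$, so $N\supseteq\Delta(2,n,4n)$, hence $N$ is a triangle group, and $(2,n,4n)$ is a maximal signature in Singerman's tables (it is neither of the form $(2,s,2s)$ nor one of the sporadic non-maximal signatures), giving $N=\Delta(2,n,4n)$ and $[N:D(n)]=2n^3=|\A^{\mathrm{m}}(C_n)|$. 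This is sound and, once written out, arguably cleaner and more uniform than the paper's computation. But as it stands your even case is a one-sentence sketch, and the two checks just named --- that $N$ contains a cocompact triangle group of finite index (this is exactly what the $(2,n,4n)$ branching lemma supplies), and the maximality of the signature $(2,n,4n)$ including small values such as $n=4$ --- are precisely the points that must be supplied for the sketch to become a proof.
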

\begin{proof}
If $(n,2)=1$ then the group $D(n)$ corresponding to the Heisenberg curve in the triangle uniformization given in eq. (\ref{triangle}) is a normal subgroup of $\Delta(n,n,n)$. The normalizer $N_{\mathrm{PSL}(2,\mathbb{R})}(D(n))$ contains the trigonal curve $\Delta(n,n,n)$ and since every group containing a triangle group with finite index is also triangle \cite[th. 10.6.5 p.279]{beardon1995geometry} the normalizer is also a triangle group. By the computation of the modular automorphism group and the classification given in 
\cite[table 2]{Kallel2005-ru} we have that $N_{\mathrm{PSL}(2,\mathbb{R})}(D(n))=D(2,n,2n)$, which gives us
\[
\mathrm{Aut}(C_n) =\frac{N_{\mathrm{PSL}(2,\mathbb{R})}(D(n))}{D(n)}=\A^{\mathrm{m}} (C_n).
\]

For the $2\mid n$ case we will employ the Riemann-Hurwitz formula. Let $G$ be the automorphism group, $|G|=2n^3 m$, where $m=[G:\mathrm{Aut}^{\mathrm{m}}(C_n)]$. Let $Y=C_n^G$. Since $\mathbb{C}(Y) \subset C_n^{H_n}$ we have that $g_Y=0$. 
By eq. (\ref{g2Cn}) $2g_{C_n}-2 =n^2(n-3)+n^2/2$ and Riemann-Hurwitz theorem  \cite[ex. IV.2.5]{Hartshorne:77} gives us that 
\[
n^2(n-3)+n^2/2= n^32m(-2 +\sum_{i=1}^r  (1-1/e_i))
\]
where $e_i\geq 2$ are the ramification indices of the $r$-points of $\mathbb{P}^1$ ramified in extension $C_n \rightarrow G\backslash C_n=\mathbb{P}^1$. 
Set $\Omega_n=-2 +\sum_{i=1}^r  (1-1/e_i)$. Then the Riemann-Hurwitz formula can be written as 
\[
n-5/2= 2mn \Omega_n. 
\]
Observe that $\Omega_n\geq 0$ and $m\geq 1$. 
So for $n>3$ we must have $\Omega_n> 0$. If $\Omega_n\geq 1/4$,  then it is obvious that $m=1$.  
Indeed, we should have
\begin{equation}
\label{m-orig}
1\leq m =\frac{2n-5}{4n \Omega_n}\leq 2-5/n <2.
\end{equation}
In the above formula we have that for $n \leq 4$ the term $2-5/n<1$, which is not compatible with the $1\leq m$ inequality. This means that for $n=4$ the inequality $\Omega_n\geq 1/4$ is not possible so $\Omega_n<1/4$.
For $\Omega_4$ the ramification index index for one point is at least $16=4n$. 
\[
\Omega_4=-2+ 
\left(
1- \frac{1}{16e}
\right)
+
\sum_{i=2}^r
\left(
1-\frac{1}{e_i}
\right) \geq 
-\frac{17}{16} + \frac{r-1}{2}.
\]
The above value for $\Omega_4$ is negative for $r \leq 3$ and $1/4$ for $r=4$.  So we need less than $3$ ramification points, which we assume that have ramification indices $16e,\kappa,\lambda$. In this case we have
\[
\Omega_4=1-\frac{1}{16e} -\frac{1}{\ell} -\frac{1}{\kappa}.
\]
Since this value has to be smaller than $1/4$ the values for $e,\lambda,\kappa$ can not take very big values. If both $\kappa,\ell \geq 3$ then
\[
\Omega_4 \geq 1-\frac{1}{16}-\frac{2}{3} > \frac{1}{4}
\] 
which is impossible. So $\kappa=2$ and $\lambda=3$
is the only possible case. 
For this case $\Omega_4 \geq 5/48$ and eq. (\ref{m-orig}) implies that 
$
m \leq 9/5 <2.$

Now we consider the $n >4$ case and we will show that $\Omega_n \geq 1/4$.
If three or more points, other than the one with $e_1=4ne$, are ramified in the cover $C_n \rightarrow Y$, then 
\[
\Omega_n=-2 +1 -\frac{1}{4ne} + \sum_{i=2}^r 
\left(
1-\frac{1}{e_i}
\right)
\geq \frac{1}{2}-\frac{1}{4ne} \geq \frac{1}{4}.
\] 
Consider now that exactly three points are ramified in $C_n\rightarrow Y$. If two of them are ramified with ramification index $2$ then $\Omega_n=-1/4n$ and this is not allowed since $\Omega_n\geq 0$. 

Assume now that we have exactly $3$ ramification points with ramification $(4ne,\kappa,\ell)$. In this case
\[
\Omega_n\geq-2 + 
\left( 1 -\frac{1}{4n}
\right) + 
\left(
1-\frac{1}{\ell}
\right) +
\left(
1 -\frac{1}{\kappa}
\right)=1 -\frac{1}{4n} -\frac{1}{\ell} -\frac{1}{k}.
\]
If $n> 4$ then $n\geq 6$ ($n$ is even) so 
\[
\Omega_n \geq \frac{23}{24} -\frac{1}{\ell} -\frac{1}{k}.
\]
It is clear that the above quantity is bigger than $1/4$ if $\ell$ and $\kappa$ are big enough. For instance if $\ell,\kappa \geq 3$ then $23/24-2/3=7/24 $ and $\Omega_n\geq 1/4$. We have to check the case $\kappa=2$ and in this case
$\Omega_n=23/24-1/2-1/\ell=11/24-1/\ell$ so the inequality $\Omega_n \geq 1/4 $ holds provided $\ell \geq 5$.

The cases $\ell=3,4$ give the corresponding bounds
 $B_\ell=(2n-5) \Omega_\ell/(4n)$, 
\[
B_3=\frac{3 (2 n-5)}{n (3 n+2)}, \qquad 
B_4=\frac{2 n-5}{n^2+n}, \qquad
\]
All the above values are $<1$ and can not bound the quantity $m$, hence they can't occur.
\end{proof}

\section{The curve $C_3$}
The Fermat curve $F_3:x^3+y^3=1$ is elliptic and it has the projective cannonical Weierstrass form $zy^2=x^3-432z^3$, see \cite[p. 50-52]{MR1193029} and \cite[ex. 3 p.32]{MR2024529}. Its torsion $3$-points are the flexes which can be computed as the zeros of the Hessian determinant:
\[
\mathrm{Hess}(y^2z-x^3+432z^3)=
\det 
\begin{pmatrix}
-6x & 0 & 0 
\\
0 & 2z & 2y 
\\
0 & 2y & 2592 y
\end{pmatrix}
=
24(y^2-1296z^2)x.
\]
\begin{itemize}
	\item
If $x=0$, then $zy^2=-432z^3$ which gives the solutions
\[
(0:1:0), (0: 12 \sqrt{-3} :1), (0: -12\sqrt{-3} :1).
\] 
\item 
If $x\neq 0$, then $y^2=1296 z^2$, which gives the solutions
$(1:0:0)$, which does not satisfy the equation of the elliptic curve. We also have the solution  
$y=\pm \sqrt{1296} z$, which we plug into the equation of the elliptic curve to obtain:
\[
z^3 1296= x^3-432 z^3,
\] 
so for $z=1$ we obtain $x^3=1728$ so $x=12 \zeta_3^i$, and $\zeta_3=(-1+\sqrt{-3})/2$ is a primitive third root of unity.
We therefore have $6$-more $3$-torsion points namelly 
\[
(12 \zeta_3^i : \pm 36: 1 ).
\]
\end{itemize}
The curve $C_3$ is by Riemann-Hurwitz formula also an elliptic curve and the covering map 
\[
C_3 \rightarrow F_3
\]
is an isogeny. For each point of order $3$ computed above V\'elu method \cite{MR294345} can be applied and using sage \cite{sage8.9} we compute the following table:

\begin{center}
\newcolumntype{g}{>{\columncolor{Gray}}c}
\begin{tabular}{|g|c|c|}
\hline
\rowcolor{LightCyan}
 
point of order 3 & Equation of isogenus curve & $j$-invariant
 \\
\hline
$(0: \pm 12 \sqrt{-3} :1)$
& 
$y^2 = x^3 + 11664$
& 
$0$
\\
$(-6(1+\sqrt{-3}) : \pm 36 : 1)$
&
$y^2 = x^3 +  2160(1-\sqrt{-3})x -109296$
&  $-12288000$
\\
$( 6(1-\sqrt{-3}): \pm 36 : 1)$
&
$y^2 = x^3 + 2160(1+\sqrt{-3})x -109296$
&  $-12288000$
\\
$(12:\pm 36:1)$
& 
$y^2=x^3-4320x-109296$
& 
$-12288000$
\\
\hline
\end{tabular}
\end{center}
The three last curves have the same $j$-invariant and are isomorphic (they are quadratic twists of each other). The first one has $j$-invariant zero and therefore the automorphism group $\mathrm{Aut}^0(E)$ of $E$, consisted of automorphisms $E\rightarrow E$ which fix the identity, is a cyclic group of order $6$, see \cite[ch. III. par. 10.1]{MR2514094}.
This is compatible with the structure of the Heisenberg curve $C_3$, since the neutral element of $C_3$ is fixed by a group of order $6$. 

On the other hand the other 3 isomorphic curves of the above table have $j$-invariant $\neq 0,1728$ so $\mathrm{Aut}^0(E)$ is a cyclic group of order $2$. Therefore the equation of $C_3$ is given by 
\[
C_3: y^2 = x^3 + 2^4\cdot 3^6.
\]

 \def\cprime{$'$}
\providecommand{\bysame}{\leavevmode\hbox to3em{\hrulefill}\thinspace}
\providecommand{\MR}{\relax\ifhmode\unskip\space\fi MR }
\providecommand{\MRhref}[2]{%
  \href{http://www.ams.org/mathscinet-getitem?mr=#1}{#2}
}
\providecommand{\href}[2]{#2}

\end{document}